\crefname{section}{Section}{Sections}
\crefname{subsection}{\S}{\S\S}
\theoremstyle{change}
\newtheorem{lemma}{Lemma}[subsection]
\newtheorem{proposition}[lemma]{Proposition}
\newtheorem{theorem}[lemma]{Theorem}
\theoremstyle{nonumberplain}
\theoremstyle{change}
\newtheorem{definition}[lemma]{Definition}
\newtheorem{example}[lemma]{Example}
\newtheorem{remark}[lemma]{Remark}
\crefname{definition}{definition}{definitions}
\crefname{lemma}{lemma}{lemmas}
\crefname{proposition}{proposition}{propositions}
\crefname{example}{example}{examples}
\crefname{remark}{remark}{remarks}
\crefname{corollary}{corollary}{corollaries}
\crefname{theorem}{theorem}{theorems}
\crefname{equation}{}{}
\theoremstyle{nonumberplain}
\newtheorem{proof}{Proof}
\newtheorem{proof of cqgfinpres}{Proof of \Cref{th.cqgfinpres}}
\newtheorem{proof of cstarqgsaft}{Proof of \Cref{th.cstarqgsaft}}
\DeclareMathOperator{\id}{id}
\DeclareMathOperator{\Hom}{Hom}
\DeclareMathOperator{\Functors}{Functors}
\DeclareMathOperator{\End}{End}
\DeclareMathOperator*{\revarinjlim}{{\tikz[baseline=(lim.base)] \draw[->] node[inner sep = 0pt] (lim) {$\lim$} (lim.south west) ++(1pt,-3pt) coordinate (a) (lim.south east) ++(-1pt,-3pt) coordinate (b) (a) -- (b) ;}}
\renewcommand\varinjlim\revarinjlim
\DeclareMathOperator*{\revarprojlim}{{\tikz[baseline=(lim.base)] \draw[<-] node[inner sep = 0pt] (lim) {$\lim$} (lim.south west) ++(1pt,-3pt) coordinate (a) (lim.south east) ++(-1pt,-3pt) coordinate (b) (a) -- (b) ;}}
\renewcommand\varprojlim\revarprojlim
\DeclareMathOperator{\cstar}{\mathrm{C}^*}
\DeclareMathOperator{\cstaroh}{\mathrm{C}^*_0}
\DeclareMathOperator{\wstar}{\mathrm{W}^*}
\DeclareMathOperator{\cqg}{\mathrm{CQG}}
\DeclareMathOperator{\cqgab}{\mathrm{CQG}_{ab}}
\DeclareMathOperator{\cstarqg}{\mathrm{C}^*\mathrm{QG}}
\DeclareMathOperator{\cstarqgab}{\mathrm{C}^*\mathrm{QG}_{ab}}
\DeclareMathOperator{\cstarqs}{\mathrm{C}^*\mathrm{QS}}
\DeclareMathOperator{\cstarohqs}{\mathrm{C}^*_0\mathrm{QS}}
\DeclareMathOperator{\wstarqs}{\mathrm{W}^*\mathrm{QS}}
\DeclareMathOperator{\astar}{\mathrm{Alg}^*}
\DeclareMathOperator{\astaroh}{\mathrm{Alg}^*_0}
\DeclareMathOperator{\astarohqs}{\mathrm{A}^*_0\mathrm{QS}}
\DeclareMathOperator{\univ}{\cat{univ}}
\DeclareMathOperator{\red}{\cat{red}}
\DeclareMathOperator{\alg}{\cat{alg}}
\DeclareMathOperator{\forget}{\cat{forget}}
\DeclareMathOperator{\env}{\cat{env}}
\DeclareMathOperator{\cof}{\cat{cofree}}
\newcommand\Fun\Functors
\newcommand{\define}[1]{{\em #1}}
\newcommand{\cat}[1]{\textsc{#1}}
\newcommand{\qedhere}{\mbox{}\hfill\ensuremath{\blacksquare}}
\title{Categorical aspects of compact quantum groups}
\author{Alexandru Chirvasitu\footnote{UC Berkeley, \url{chirvasitua@math.berkeley.edu}}}
\begin{document}
\maketitle

\begin{abstract}
We show that either of the two reasonable choices for the category of compact quantum groups is nice enough to allow for a plethora of universal constructions, all obtained ``by abstract nonsense'' via the adjoint functor theorem. This approach both recovers constructions which have appeared in the literature, such as the quantum Bohr compactification of a locally compact semigroup, and provides new ones, such as the coproduct of a family of compact quantum groups, and the compact quantum group freely generated by a locally compact quantum space. In addition, we characterize epimorphisms and monomorphisms in the category of compact quantum groups.  
\end{abstract}

\noindent {\em Keywords: compact quantum group, CQG algebra, presentable category, SAFT category, adjoint functor theorem}

\tableofcontents


\section*{Introduction}

Compact quantum groups were introduced in essentially their present form in \cite{Woronowicz1987} (albeit under a different name), and the area has been expanding rapidly ever since. The subject can be viewed as part of Connes' general program \cite{Connes1994} to make ``classical'' notions (spaces, topology, differential geometry) non-commutative: One recasts compact groups as $C^*$-algebras via their algebras of continuous functions, and then removes the commutativity assumption from the definition (see \Cref{subse.cstarqg} for details). Starting with the relatively simple resulting definition, all manner of compact-group-related notions and constructions can then be generalized to the non-commutative setting: Peter-Weyl theory (\cite{Woronowicz1987,MR1616348}), Tannaka-Krein duality and reconstruction (\cite{MR943923,MR1424670}), Pontryagin duality (\cite{MR1059324}), actions on operator algebras (\cite{MR1372527,MR1637425,MR1697607}) and other structures, such as (classical or quantum) metric spaces (\cite{MR2174219,2010arXiv1007.0363Q}) or graphs (\cite{MR1937403}), and so on. This list is not (and cannot be) exhaustive.  

The goal of this paper is to analyze compact quantum groups from a category-theoretic perspective, with a view towards universal constructions. 

Bits and pieces appear in the literature: In \cite{MR1316765}, Wang constructs coproducts in the category $\cstarqg$ (see \Cref{subse.cstarqg}), opposite to that of compact quantum groups (it consists of $C^*$-algebras, which are morally algebras of functions on the non-existent quantum groups). More generally, he constructs other types of colimits (e.g. pushouts) of diagrams with one-to-one connecting morphisms. The fact that this coproduct can be constructed simply at the level of $C^*$-algebras (forgetting about comultiplications) parallels the fact that classically, the underlying space of a categorical product $\prod G_i$ of compact groups $G_i$ is, as a set, just the ordinary Cartesian product. The category of compact groups, however, also admits coproducts, and they are slightly more difficult to construct: One endows the ordinary, discrete coproduct $\coprod G_i$ (i.e. coproduct in the category of discrete groups, also known as the \define{free product} of the $G_i$) with the finest topology making the canonical inclusions $G_j\to\coprod G_i$ continuous, and then takes the Bohr compactification of the resulting topological group. It is natural, then, to ask whether or not coproducts of compact quantum groups exist, or equivalently, whether the category $\cstarqg$ opposite to that of compact quantum groups has products. We will see in \Cref{subse.lim} that this is indeed the case, and moreover, the category is complete (i.e. it has all small limits).   

Another example of universal construction that fits well within the framework of this paper is the notion of quantum Bohr compactification \cite{MR2210362}. One of the main results of that paper is, essentially, that the forgetful functor from compact quantum groups to locally compact quantum semigroups has a left adjoint; remembering that we are always passing from (semi)groups to algebras of functions and hence reversing arrows, this amounts to the existence of a certain \define{right} adjoint (\cite[3.1,3.2]{MR2210362}). \Cref{se.appl} recovers this as one among several right-adjoint-type constructions, such as compact quantum groups ``freely generated by a quantum space'' (as opposed to quantum semigroup; see \Cref{subse.spaces}).  

Most compact-quantum-group-related universal constructions in the literature seem to be of a ``left adjoint flavor'': the already-mentioned colimits in $\cstarqg$, the quantum automorphism groups of, say, \cite{MR1637425}, which are basically initial objects in the category of $\cstarqg$ objects endowed with a coaction on a fixed $C^*$-algebra, etc. By contrast, apart from the Bohr compactification mentioned in the previous paragraph, universal constructions of the right adjoint flavor (limits in $\cstarqg$, or right adjoints to functors with domain $\cstarqg$) appear not to have received much attention. This is all the more surprising for at least two reasons. First, they seem to be more likely to exist than the other kind of universal construction; example: a (unital, say) $C^*$-algebra $A$ endowed with a coassociative map $A\to A\otimes A$ into its minimal tensor square (this would be the object dual to a compact quantum semigroup) always has a compact quantum group (meaning its dual object, as in \Cref{def.cstarqg}) mapping into it universally, but does not, in general, have a compact quantum group receiving a universal arrow from it (\Cref{rem.noleftadj}). Secondly, the representation-theoretic interpretation of limits in $\cstarqg$ is often simpler than that of colimits; see \Cref{prop.limTann} (especially part (a)) and surrounding discussion.

The structure of the paper is as follows:

\Cref{se.prel} recalls the machinery that will be used in the sequel and fixes notations and conventions, introducing the two versions of the category opposite that of compact quantum groups: $\cqg$, consisting of so-called CQG algebras (these are like the algebra of representative functions on a compact group; see \Cref{def.cqg}), and $\cstarqg$, whose objects are analogous to algebras of continuous functions on compact groups (\Cref{def.cstarqg}). 

In \Cref{se.cqgfinpres}, \Cref{th.cqgfinpres} shows that the category $\cqg$ is \define{finitely presentable} (\Cref{subse.finpres}). This technical property will later allow us to reduce the existence of right adjoints for functors defined on $\cqg$ to checking that these functors are cocontinuous, i.e. preserve colimits. This is typically an easy task, as the routine nature of most proofs in \Cref{se.appl} shows.

\Cref{se.cstarqgsaft} proves a property slightly weaker than finite presentability for the category $\cstarqg$ (\Cref{th.cstarqgsaft}). The nice features from the previous section are preserved however, and the same types of results (existence of right adjoints to various functors defined on $\cstarqg$) follow. 

In \Cref{se.appl} we list some of the consequences of the previous two sections. These include the automatic existence of limits in $\cqg$ and $\cstarqg$ (\Cref{subse.lim}), compact quantum groups freely generated by quantum spaces and semigroups (\Cref{subse.spaces,subse.bohr} respectively), and universal Kac type compact quantum groups associated to any given compact quantum group (\Cref{subse.kac}).  

Finally, in \Cref{se.mono} we characterize monomorphisms in the categories $\cqg$ and $\cstarqg$. It turns out that in the former they have to be one-to-one, whereas in the latter being mono is slightly weaker than injectivity (\Cref{prop.mono}). The results are analogous to the fact (\cite[Proposition 9]{MR0260829}) that epimorphisms of compact groups are surjective.  

\subsection*{Acknowledgements} This work is part of my PhD dissertation. I would like to thank my advisor Vera Serganova for all the support, and Piotr So\l tan for helpful discussions on the contents of \cite{MR2210362}.


\section{Preliminaries}\label{se.prel}

All algebraic entities in this paper (algebras, coalgebras, bialgebras, etc.) are complex. A $*$-algebra is, as usual, a complex algebra endowed with a conjugate linear, involutive, algebra anti-automorphism `$*$'. Unless we are dealing with non-unital $C^*$-algebras as in \cref{subse.locallycompact} below, in which case the reader will be warned, algebras are assumed to be unital (and coalgebras are always counital).  

Our main references for the necessary basics on coalgebra, bialgebra and Hopf algebra theory are \cite{MR594432,MR1243637,MR0252485}. The notation pertaining to coalgebras is standard: $\Delta$ for antipodes and $\varepsilon$ for the counit, perhaps adorned with the name of the coalgebra if we want to be more precise (example: $\Delta_C$, $\varepsilon_C$). The same applies to antipodes for Hopf algebras, which are usually denoted by $S$. We use Sweedler notation both for comultiplication, as in $\Delta(c)=c_1\otimes c_2$, and for comodule structures: If $\rho:M\to M\otimes C$ is a right $C$-comodule structure, it will be written as $\rho(m)=m_0\otimes m_1$. All comodules are right, and the category of right comodules over a coalgebra $C$ is denoted by $\mathcal{M}^C$. 

For any comodule $V$ over any coalgebra $H$ (the notation suggests that it will become a Hopf algebra soon), there is a largest subcoalgebra $H(V)$ over which $V$ is a comodule. If the comodule structure map is $\rho:V\to V\otimes H$ and $(e_i)_{i\in I}$ is a basis for $V$,then $H(V)$ is simply the span of the elements $u_{ij}$ defined by \[ \rho(e_j) = \sum_i e_i\otimes u_{ij}. \] We refer to $u_{ij}$ as the \define{coefficients} of the basis $(e_i)$, to $(u_{ij})$ as the \define{coefficient matrix} of the basis, and to $H(V)$ as the \define{coefficient coalgebra} of $V$. The coalgebra structure is particularly simple on coefficients: \[\Delta(u_{ij})=\sum_k u_{ik}\otimes u_{kj},\quad \varepsilon(u_{ij})=\delta_{ij}.\] Henceforth, the standing assumption whenever we mention coefficients and coefficient coalgebras is that the comodule in question is finite-dimensional (that is, $I$ is finite). When $V$ is simple, the coefficients $u_{ij}$ with respect to some basis are linearly independent, and the coefficient coalgebra is a \define{matrix coalgebra}, in the sense that its dual is a matrix algebra. 

\begin{remark}\label{rem.corep}
Note that maps $V\to V\otimes C$ are the same as elements of $V\otimes V^*\otimes C=\End(V)\otimes C$. If $u=(u_{ij})$ is the coefficient matrix of a basis $e_i$, $i=\overline{1,n}$ for $V$ and $\End(V)$ is identified with $M_n$ via the same basis $e_i$, then the element of $\End(V)\otimes C\cong M_n(C)$ corresponding to the coaction is exactly the coefficient matrix $u$. We will often blur the distinction between these two points of view, and might refer to $u$ itself as the comodule structure. 
\end{remark}
 
If in the above discussion $H$ is a Hopf algebra, more can be said: The matrix $S(u_{ij})_{i,j}$ is inverse to $(u_{ij})_{i,j}$. Moreover, giving the dual $V^*$ the usual right $H$-comodule structure \[ \langle f_0,v\rangle f_1 = \langle f,v_0\rangle S(v_1),\quad v\in V,\ f\in V^*, \] the coefficient matrix of the basis dual to $(e_i)$ is precisely $(S(u_{ji}))_{i,j}$ (note the flipped indices).  

A word on tensor products: In this paper, the symbol `$\otimes$' means at least three things. When appearing between purely algebraic objects, such as algebras or just vector spaces, it is the usual, algebraic tensor product. Between $C^*$-algebras it always means the minimal, or injective tensor product (\cite[IV.4]{MR1873025}). Finally, on rare occasions, we use the so-called spatial tensor product (referred to as $W^*$-tensor product in \cite[IV.5]{MR1873025}) between von Neumann (or $W^*$) algebras. It will always be made clear what the nature of the tensored objects is, so that no confusion is likely to arise.


\subsection{Compact quantum groups}\label{subse.cstarqg}

This is by now a very rich and well-referenced theory, so we will be very brief, and will refer the reader to one of the many excellent sources (e.g. the papers and book cited below and the references therein) for details on the topic. 

No list of references would be complete without mentioning the seminal papers \cite{Woronowicz1987,MR943923}, where Woronowicz laid the foundation of the subject, introducing the main characters under the name ``compact matrix pseudogroups'', while an exposition of the main features of the theory is given by the same author in \cite{MR1616348}. Other good references are the survey paper \cite{MR1741102}, and \cite[11.4]{MR1492989}.  

As mentioned in the introduction, the main idea is that since one can study compact groups by means of the algebras of continuous functions on them, which are commutative, unital $C^*$-algebras with some additional structure, dropping the commutativity assumption but retaining the extra structure should still lead to interesting objects, which are trying to be ``continuous functions on a quantum group''. The additional structure just alluded to is captured in the following definition (\cite[3.1.1]{MR1741102}):

\begin{definition}\label{def.cstarqg}
A \define{compact quantum group} is a pair $(A,\Delta)$, where $A$ is a unital $C^*$-algebra, and $\Delta:A\to A\otimes A$ is a morphism of unital $C^*$-algebras satisfying the conditions
\begin{enumerate}
\item (Coassociativity) $(\Delta\otimes\id)\circ\Delta=(\id\otimes\Delta)\circ\Delta$;
\item (Antipode) The subspaces $\Delta(A)(1\otimes A)$ and $\Delta(A)(A\otimes 1)$ are dense in $A\otimes A$.
\end{enumerate}

$\cstarqg$ is the category whose objects are compact quantum groups, and whose morphisms $f:A\to B$ are unital $C^*$-algebra maps preserving the comultiplication in the sense that $(f\otimes f)\circ\Delta_A=\Delta_B\circ f$.
\end{definition}

The second condition needs some explanation. The space $\Delta(A)(1\otimes A)$ is defined as the linear span of products of the form $\Delta(a)(1\otimes b)\in A\otimes A$, and similarly for $\Delta(A)(A\otimes 1)$. The condition is named `antipode' because it has to do with the demand that $A$, regarded as a kind of bialgebra, have something like an antipode. For comparison, consider the case of an ordinary, purely algebraic bialgebra $B$. The condition that it have an antipode, i.e. that it be a Hopf algebra, is equivalent to the requirement that the map \[ B\otimes B\to B\otimes B,\quad x\otimes y\mapsto x_1\otimes x_2y\] be a bijection. 

Since compact quantum groups as defined above are morally functions of algebras, representations of the group must be comodules of some sort over the corresponding algebras (endowed with their comultiplication). Some care must be taken, as \Cref{def.cstarqg} makes no mention of a counit, and so the usual definition of comodule has to be modified slightly. The solution is (see \cite[discussion before Proposition 3.2.1]{MR1741102}): 

\begin{definition}\label{def.cstarqg_comod}
Let $(A,\Delta)$ be a compact quantum group. A \define{finite-dimensional comodule} over $A$ is a finite-dimensional complex vector space $V$ together with a coassociative coaction $\rho:V\to V\otimes A$ such that the corresponding element of $\End(V)\otimes A$ is invertible. 
\end{definition}

We will often drop the adjective `finite-dimensional'. \Cref{rem.corep} applies, and we will often refer to the coefficient comatrix of some basis as \define{being} the comodule structure. \cite[11.4.3, Lemma 45]{MR1492989} says that any comodule is \define{unitarizable}, in the sense that there is an inner product on $V$ making the coefficient matrix $u\in M_n(A)$ of an orthonormal basis unitary (cf. \Cref{def.unit}).


\subsection{CQG algebras}\label{subse.cqg}

These are the algebraic counterparts of compact quantum groups. More precisely, if a compact quantum group as in \Cref{def.cstarqg} plays the role of the algebra of continuous functions on a ``quantum group'', then the associated CQG algebra wants to be the algebra of representative functions of the quantum group, i.e. matrix coefficients of finite-dimensional unitary representations. The main reference for this subsection is \cite[11.1-4]{MR1492989}. 

Recall that a \define{Hopf $*$-algebra} $H$ is a Hopf algebra with a $*$-structure making $H$ into a $*$-algebra, and such that the comultiplication and counit are morphisms of $*$-algebras. This is the kind of structure that allows one to define what it means for a representations of a quantum group (i.e. a \define{co}module over the corresponding ``function algebra'') to be unitary. 

Let $V$ be an $n$-dimensional comodule over a Hopf $*$-algebra $H$.

\begin{definition}\label{def.unit}
If $(\ \mid\ )$ is an inner product on $V$, the pair $(V,(\ \mid\ ))$ is said to be \define{unitary} provided the coefficients $u_{ij}$ of an orthonormal basis $e_i$, $i=\overline{1,n}$ form a unitary matrix in $H$. 

A comodule $V$ is said to be \define{unitarizable} if there exists an inner product making it unitary. This is equivalent to saying that for any basis $(e_i)$, the coefficient matrix $(u_{ij})_{i,j}$ can be made unitary by conjugating it with a scalar $n\times n$ matrix. 
\end{definition}

This is \cite[Definition 5]{MR1492989}, and it is the correct compatibility condition for a comodule structure and an inner product. See also \cite[11.1.5, Proposition 11]{MR1492989} for alternative characterizations of unitary comodules. We are now ready to recall the main definition of this subsection (\cite[11.3.1, Definition 9]{MR1492989}): 

\begin{definition}\label{def.cqg}
A \define{CQG algebra} is a Hopf $*$-algebra which is the linear span of the coefficient matrices of its unitarizable (or equivalently, unitary) finite-dimensional comodules.  

The category having CQG algebras as objects and Hopf $*$-algebra morphisms as arrows will be denoted by $\cqg$. 
\end{definition}

\begin{remark}\label{rem.quotientCQG}
It is a simple but useful observation that a quotient Hopf $*$-algebra of a CQG algebra is automatically CQG. Indeed, a morphism of Hopf $*$-algebras will turn a unitary coefficient matrix into another such. 
\end{remark}

Let us recall that CQG algebras are automatically \define{cosemisimple} \cite[11.2]{MR1492989}, i.e. their categories of comodules are semisimple. Another way to say this is that a CQG algebra is the direct sum of its matrix subcoalgebras. 

The following example will play an important role in \Cref{se.cqgfinpres}. It is a family of ``universal'' CQG algebras, in a sense that will be made precise below (see \cite[11.3.1, Example 6]{MR1492989}, or \cite{MR1382726}, where these objects were introduced in their $C^*$-algebraic incarnation).  

\begin{example}\label{ex.auq}
Let $Q\in GL_n(\mathbb C)$ be a positive operator, and denote by $A_u(Q)$ the $*$-algebra freely generated by elements $u_{ij}$, $i,j=\overline{1,n}$ subject to the relations making both $u=(u_{ij})_{i,j}$ and $\displaystyle Q^{\frac 12} \overline u Q^{-\frac 12}$ unitary, where $\overline u=(u^*_{ij})_{i,j}$. Strictly speaking, the main character here is the pair $(A_u(Q),u)$ rather than just $A_u(Q)$: We always assume the $u_{ij}$ are fixed as part of the structure, and refer to them as \define{the standard generators} of $A_u(Q)$.  

One way to state the universality property mentioned above is: For any CQG algebra $A$ and any unitary coefficient matrix $v=(v_{ij})$ satisfying $S^2(v)=QvQ^{-1}$, the map $u_{ij}\mapsto v_{ij}$ lifts to a unique CQG algebra morphism $A_u(Q)\to A$. 

Note that $A_u(Q)$ has a standard $n$-dimensional unitary comodule with orthonormal basis $(e_i)_{i=1}^n$, with the obvious structure $e_j\mapsto \sum_i e_i\otimes u_{ij}$. 
\end{example}

There are various functors going back and forth between $\cqg$ and $\cstarqg$. First, since a CQG algebra is generated by elements of unitary matrices, there is, for any element of the algebra, a uniform bound on the norm that element can have when acting on any Hilbert space. It follows that any CQG algebra $A$ has an enveloping $C^*$-algebra $\overline{A}$. The fact that the comultiplication and counit of the CQG algebra lift to give $\overline{A}$ a compact quantum group structure follows from the universality property of this envelope, as does the functoriality of this construction (\cite[11.3.3]{MR1492989}). This functor will be denoted by $\univ:\cqg\to\cstarqg$. 

On the other hand, for any compact quantum group $B$, the coefficients of all comodules (\Cref{def.cstarqg_comod}) span a sub-Hopf $*$-algebra of $B$ in the obvious sense, and again, the construction is easily seen to be functorial. We denote this functor by $\alg:\cstarqg\to\cqg$. Moreover, $\alg(B)$ is the only dense sub-Hopf $*$-algebra of $B$ (\cite[A.1]{MR1862084}).    

Functors constructed in some natural way and going in opposite directions are in the habit of being adjoints, and this situation is no different: $\univ$ is the left adjoint. As it happens, $\alg$ almost has right adjoint too. `Almost' because only its restriction to the category of CQG algebras and one-to-one morphisms has a right adjoint, $\red$, associating to each CQG algebra $A$ the so-called \define{reduced} \cite[$\S$2]{MR1862084} compact quantum group having $A$ as its dense sub-Hopf $*$-algebra. It is the ``smallest'' such object, in the sense that any compact quantum group $A$ admits a unique surjective morphism $A\to \red(\alg(A))$ which restricts to the identity on $\alg(A)$. A detailed discussion on the interplay between the three functors mentioned in this paragraph can be found in \cite[6.2]{MR2805748}. 

It is probably clear by now that (the opposites of) $\cqg$ and $\cstarqg$ are ``the two reasonable choices for the category of compact quantum groups'' of the abstract. Which one is most convenient in any given case depends on which aspects of the theory one wishes to focus on. For representation-theoretic purposes, $\cqg$ seems to be the correct choice, since the CQG algebra $\alg(B)$ discussed above is tailor-made to capture all information about unitary $B$-comodules. On the other hand, there are purely analytic concepts (coamenability \cite{MR1862084}) whose very definition requires the use of $\cstarqg$. 

In this paper, the $\cqg$ vs. $\cstarqg$ distinction is a matter of technical necessity. For various reasons having to do with the topological aspect of being a $C^*$ rather than a $*$-algebra, $\cqg$ is the easier category to work with, as should be apparent from the announced results once we review the necessary category theory: The title of \Cref{se.cqgfinpres} (see \Cref{subse.finpres}) is stronger than that of \Cref{se.cstarqgsaft} (\Cref{subse.saft}).


\subsection{Locally compact and algebraic quantum spaces and semigroups}\label{subse.locallycompact}

One of the themes that will be explored in \Cref{se.appl} is, very roughly, the existence of ``compact quantum groups freely generated by quantum objects''. Here, `objects' can be things like `semigroups' or `spaces'. Keeping in mind that we are placing ourselves in the dual picture, where spaces are explored through functions on them, we recall in this subsection how non-unital $C^*$-algebras or plain $*$-algebras allow one to formalize such notions. 

A good, brief account of more or less everything we need for the locally compact side of the picture can be found in the `Notations and conventions' section of \cite{MR1832993} (assuming rudiments on multiplier algebras of $C^*$-algebras \cite[III.6]{MR1873025}). 

Recall that for not-necessarily-unital $C^*$-algebras $A$ and $B$, a \define{morphism} from $A$ to $B$ is by definition a continuous $*$-algebra homomorphism $f:A\to M(B)$ into the multiplier algebra of $B$ which is non-degenerate, meaning that the space $f(A)B$ is dense in $B$. It is then explained in \cite{MR1832993} how two such creatures can be composed, meaning that non-unital $C^*$-algebras together with morphisms as defined above constitute a category denoted here by $\cstaroh$. It is to be thought of as the category dual to that of locally compact quantum spaces. $\cstar$ is the subcategory consisting of unital $C^*$-algebras. Note that the non-degeneracy condition on morphisms automatically makes a $\cstaroh$ arrow between objects of $\cstar$ unital.  

For the definition of locally compact quantum semigroups we follow \cite{MR2210362}, referring again to the preliminary section of \cite{MR1832993} for the missing details on compositions of morphisms in $\cstaroh$ (needed to make sense of the coassociativity condition below).

\begin{definition}\label{def.cstarohqs}
A \define{locally compact quantum semigroup} is a pair $(A,\Delta)$, where $\Delta:A\to A\otimes A$ is a morphism in $\cstaroh$, coassociative in the obvious sense.  

The category $\cstarohqs$ has locally compact quantum semigroups as objects, and $\cstaroh$-morphisms compatible with comultiplications as arrows. 
\end{definition}

Turning now to the algebraic side, everything just said has a natural analogue. We again have to deal with multiplier algebras, this time of not-necessarily-unital $*$-algebras; \cite[Appendix]{MR1220906} provides sufficient background, and we will freely use the results and terminology therein. All $*$-algebras are assumed to be non-degenerate, in the sense that $ab=0$, $\forall b$ implies $a=0$ (the $*$-structure makes this condition symmetric). 

For $*$-algebras $A$ and $B$, a morphism $A\to B$ is by definition a $*$-homomorphism $f:A\to M(B)$, non-degenerate in the sense that $f(A)B$ spans $B$. Composition goes through essentially as in the $C^*$ case, and we thus get a category $\astaroh$. As before, the full subcategory $\astar$ on unital $*$-algebras has only unital morphisms as arrows. The algebraic counterpart to \Cref{def.cstarohqs} is

\begin{definition}\label{def.algstarohqs}
An \define{algebraic quantum semigroup} is a pair $(A,\Delta)$, where $\Delta:A\to A\otimes A$ is a coassociative morphism in $\astaroh$. 

The category $\astarohqs$ has algebraic quantum semigroups as objects and and $\astaroh$-morphisms compatible with comultiplications as arrows.
\end{definition}

\begin{remark}
Whatever results we prove below within the framework of \Cref{def.algstarohqs}, close analogues exist for plain complex algebras rather than $*$-algebras. I believe this one example is sufficient to illustrate how the universal constructions of \Cref{se.appl} go through in the algebraic, as well as the $C^*$-algebraic setting. 
\end{remark}

\subsection{SAFT categories and the adjoint functor theorem}\label{subse.saft}

As \Cref{se.appl} below is all about showing that certain functors have adjoints, in this subsection and the next we recall the categorical machinery involved in this. The main reference here is \cite{MR1712872}. The set of morphisms $x\to y$ in a category $\mathcal C$ will be denoted by $\mathcal C(x,y)$. Recall that categories with all (co)limits (always small in this paper) are said to be \define{(co)complete}, and functors preserving those (co)limits are called \define{(co)continuous} (so `complete' here means the same thing as Mac Lane's `small-complete' \cite[V]{MR1712872}). 

A class $S$ of objects in a category is said to be a \define{generator} (or a generating class) if any two distinct parallel arrows $f\ne g:y\to z$ stay distinct upon composition with an arrow $S\ni x\to y$ (\cite[V.7]{MR1712872}). We call category \define{generated} if there is a generating set (as opposed to a proper class). 

An arrow $f:x\to y$ in a category $\mathcal C$ is an \define{epimorphism} if arrows out of $y$ are uniquely determined by their ``restriction to $x$'' via composition with $f$ (\cite[I.5]{MR1712872}). The \define{quotient objects} of $x$ are the epimorphisms with source $x$, identified up to isomorphism in the comma category $x\downarrow\mathcal C$ of arrows with source $x$ \cite[II.6]{MR1712872}. Finally, $\mathcal C$ is said to be \define{co-wellpowered} if for every object $x$, the class of quotient objects of $x$ is actually a set.  

We explained above how the aim is to construct things like ``the compact quantum group freely generated by a quantum semigroup''. What this means, precisely, remembering that we are working with algebra-of-functions-type objects, is that we want a right adjoint to, say, the inclusion functor $\iota:\cqg\to\astarohqs$ (this is just one example; there is also a $C^*$ version). Typically, when trying to show that a functor $\iota$ is a left adjoint, one needs to check (1) that $\iota$ is cocontinuous (this is certainly necessary, as left adjoints are always cocontinuous) and (2) that some kind of solution set condition is satisfied \cite[V.6.2]{MR1712872}. For some categories, however, (2) is unnecessary: they are such that \define{any} cocontinuous functor out of them is automatically a left adjoint. One sufficient set of conditions that will ensure this is provided by the following result, due to Freyd and referred to in the literature as the \define{special adjoint functor theorem} (dual to \cite[V.8.2]{MR1712872}):

\begin{theorem}\label{th.adj}
Let $\mathcal C$ be a cocomplete, generated, and co-wellpowered category. Then, any cocontinuous functor with domain $\mathcal C$ is a left adjoint.\qedhere
\end{theorem}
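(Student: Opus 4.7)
The plan is to construct the right adjoint $G$ pointwise: for each $d \in \mathcal D$, produce a terminal object $(G(d), \epsilon_d)$ in the comma category $(F \downarrow d)$, whose objects are pairs $(c, f: Fc \to d)$ and whose morphisms $(c, f) \to (c', f')$ are arrows $h: c \to c'$ with $f' \circ Fh = f$. Once such terminal objects exist for every $d$, the assignment $d \mapsto G(d)$ extends uniquely to a functor by universal property, and the counits $\epsilon_d$ exhibit $F \dashv G$.

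First I would verify that $(F \downarrow d)$ inherits the three hypotheses of $\mathcal C$: cocomplete (colimits computed in $\mathcal C$, the structure arrow induced by cocontinuity of $F$); co-wellpowered (quotients of $(c, f)$ correspond to quotients of $c$ in $\mathcal C$); and generated by the small family $\{(s_i, \alpha) : i \in I,\ \alpha \in \mathcal D(Fs_i, d)\}$ where $\{s_i\}_{i\in I}$ is a generator of $\mathcal C$. I would then assemble the generators into a ``big'' object
\[ A_d := \coprod_{i \in I,\ \alpha \in \mathcal D(Fs_i, d)} s_i^{(\alpha)}, \]
with canonical map $\phi_d: FA_d \to d$ produced from the family $\{\alpha\}$ via cocontinuity of $F$. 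By co-wellpoweredness, the quotients $A_d \twoheadrightarrow q$ through which $\phi_d$ factors form a small set $\mathcal Q$; this set is directed, because the pushout in $\mathcal C$ of two such quotients is again a quotient of $A_d$ to which $\phi_d$ descends by cocontinuity of $F$. Defining $(T_d, \epsilon_d)$ to be the colimit of $\mathcal Q$ in $\mathcal C$, the structure map $A_d \to T_d$ remains epi (a directed colimit of epis under a fixed source is epi), so $(T_d, \epsilon_d)$ is itself the maximum element of $\mathcal Q$.

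The heart of the argument is showing that $(T_d, \epsilon_d)$ is actually terminal in $(F \downarrow d)$. For existence of a map from an arbitrary $(c, g)$: set $B := \coprod_{(i, h: s_i \to c)} s_i$, so that the canonical $B \twoheadrightarrow c$ is epi by the generating property of $\{s_i\}$. Each pair $(i, h)$ also picks out an index $(i, g \circ Fh)$ in the coproduct defining $A_d$, producing a natural map $B \to A_d$, and one checks the compatibility $g \circ F(B \to c) = \phi_d \circ F(B \to A_d)$. The pushout $P := c \sqcup_B A_d$ in $\mathcal C$ receives the pushed-out epi $A_d \twoheadrightarrow P$, and cocontinuity of $F$ equips $P$ with a compatible structure map $FP \to d$, placing $(P, \cdot) \in \mathcal Q$; the desired morphism is then $(c, g) \to (P, \cdot) \to (T_d, \epsilon_d)$. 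For uniqueness, the coequalizer in $(F \downarrow d)$ of any two parallel arrows $(c, g) \rightrightarrows (T_d, \epsilon_d)$ is still a quotient of $A_d$ through which $\phi_d$ factors, hence lies in $\mathcal Q$; by maximality of $T_d$ this coequalizer map must be an isomorphism, forcing the two arrows to coincide.

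The main obstacle is the existence clause in the terminality check: it requires orchestrating three ingredients carefully, namely the generating property of $\{s_i\}$ (to cover $c$ by an epi from a coproduct of generators), the specific indexing of $A_d$ by $\mathcal D$-morphisms (so each generator-map into $c$ produces a matching index and hence a map into $A_d$), and the cocontinuity of $F$ (both to construct $\phi_d$ and to guarantee the pushout inherits a structure map to $d$). The uniqueness half and the promotion of $d \mapsto (T_d, \epsilon_d)$ to a functor are then routine consequences of universality and the fact that every arrow in $\mathcal D$ induces, via the construction, a morphism between the corresponding terminal objects.
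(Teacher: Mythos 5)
The paper does not actually prove this statement: it is quoted as Freyd's special adjoint functor theorem, dual to Mac Lane [V.8.2], with no proof supplied. Your argument is, in essence, the textbook proof of that theorem carried out in its dual form, and it is correct: the terminal object of $(F\downarrow d)$ is realized as the largest quotient (the ``cointersection'') of the coproduct $A_d$ of generators indexed by arrows $Fs_i\to d$, exactly dual to Mac Lane's construction of an initial object as the intersection of all subobjects of a product of cogenerators. Two points deserve to be made explicit. First, the claim in your opening paragraph that quotient objects of $(c,f)$ in $(F\downarrow d)$ correspond to quotient objects of $c$ in $\mathcal C$ is the one genuinely delicate transfer (an epimorphism in the comma category is not obviously one whose underlying arrow is epi); but your actual construction never invokes cocompleteness, co-wellpoweredness, or generation of the comma category --- only of $\mathcal C$ itself --- so that paragraph can simply be dropped. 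Second, at several places you implicitly cancel $F$ applied to an epimorphism: to see that each $q\in\mathcal Q$ carries a \emph{unique} structure map $Fq\to d$, that the transition maps of the directed system $\mathcal Q$ respect these structure maps, and that $P\to T_d$ is a morphism over $d$. This is legitimate because a cocontinuous functor preserves epimorphisms ($f$ is epi iff the square with two parallel sides $f$ and the other two the identity is a pushout), but the observation should be stated, since without it the diagram $\mathcal Q\to(F\downarrow d)$ is not even well defined. With those two touch-ups the proof is complete.
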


In view of this result, it is natural to isolate the hypotheses:

\begin{definition}\label{def.saft}
A category is \define{SAFT} if it is cocomplete, generated, and co-wellpowered. 
\end{definition}

\begin{remark}\label{rem.saftcomplete}
Not-necessarily-cocomplete categories satisfying the adjoint functor theorem in the sense that functors are only required to preserve those colimits which exist are called `compact' in \cite{MR850527}. One important property of compact (and hence SAFT) categories is that they are automatically complete. This is, for example, the implication (ii) $\Rightarrow$ (v) in \cite[Theorem 5.6]{MR850527}.  
\end{remark}


\subsection{Finitely presentable categories}\label{subse.finpres}

One way to be SAFT is to be what in the literature is called `locally presentable'. We review the main features of the theory here, and refer mainly to \cite{MR1294136} for details. For brevity, in this paper we drop the word `locally'. 

A poset $(J,\le)$ is said to be \define{filtered} if every finite subset is majorized by some element (this is \cite[1.4]{MR1294136}, restricted to posets as opposed to arbitrary categories). In the sequel, a \define{filtered diagram} in a category $\mathcal C$ is a functor a $(J,\le)\to\mathcal C$, and a \define{filtered colimit} is a colimit of such a functor. Then, \cite[1.9]{MR1294136} is (essentially, via \cite[1.5]{MR1294136}):

\begin{definition}\label{def.finpres}
An object $x\in\mathcal C$ is \define{finitely presentable} if the functor $\mathcal C(x,-)$ preserves filtered colimits. 

The category $\mathcal C$ is finitely presentable if it is cocomplete, and there is a set $S$ of finitely presentable objects such that every object in $\mathcal C$ is a filtered colimit of objects in $S$.  
\end{definition}

More rigorously, the last condition says that every object is the colimit of a functor $F:J\to\mathcal C$ taking values in $S$, with $(J,\le)$ filtered.

\begin{example}
All categories familiar from algebra, of the form `set with this or that kind of structure', such as groups, abelian groups, monoids, semigroups, algebras, $*$-algebras, modules over a ring, etc. are finitely presentable. These are the so-called finitary varieties of algebras \cite[3.A]{MR1294136}, `finitary' having to do with `\define{finitely} presentable'. 

The terminology is also inspired by such examples. In the category of modules over a ring, say, an object is finitely presentable in the above abstract sense if and only if it has a finite presentation in the usual sense (in full generality, the result is \cite[3.11]{MR1294136}). 
\end{example}

What matters here is that as mentioned above, finitely presentable implies SAFT. Indeed, cocompleteness is part of \Cref{def.finpres}, and the generating set required for SAFT-ness almost is: $S$ is easily seen to be a generator. Co-wellpowered-ness, on the other hand, is the difficult result \cite[1.58]{MR1294136}. 

The following proposition is the criterion of finite presentability we use in the proof of the main result of \Cref{se.cqgfinpres}. It is a consequence of \cite[1.11]{MR1294136} (via 0.5, 0.6 of op. cit.), and in order to state it, one more piece of terminology is needed. 

\begin{definition}\label{def.reggen}
We will say that a generator $S$ of a cocomplete category $\mathcal C$ is \define{regular} if every object of $\mathcal C$ is the coequalizer of two parallel arrows $f,g:y\to z$, where $y$ and $z$ are coproducts of objects in $S$. 
\end{definition}

\begin{proposition}\label{prop.finpres}
A cocomplete category with a regular generator consisting of finitely presentable objects is finitely presentable. 
\qedhere
\end{proposition}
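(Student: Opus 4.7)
The plan is to exhibit a set $T$ of finitely presentable objects such that every object of $\mathcal C$ is a filtered colimit from $T$. I would take $T$ to be (a skeleton of) the closure of the given regular generator $S$ under finite colimits in $\mathcal C$; this is genuinely a set because, up to isomorphism, finite diagrams in a set are parameterized by set-much data.

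A first, easy step is that every object of $T$ is finitely presentable. The key ingredient is the standard observation that finite colimits preserve finite presentability: if $c = \colim_J F$ with $J$ finite and each $F(j)$ finitely presentable, then $\mathcal C(c,-) = \lim_J \mathcal C(F(-),-)$ is a finite limit, in sets, of functors preserving filtered colimits, hence itself preserves them, because finite limits commute with filtered colimits in the category of sets.

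Given any $c \in \mathcal C$, the comma category $T \downarrow c$ is then filtered, since $T$ is closed under finite colimits in $\mathcal C$: joint upper bounds come from coproducts in $T$, and coequalizers of parallel arrows into $c$ again land in $T$ over $c$. The diagram $F_c : T\downarrow c \to \mathcal C$ sending $(t \to c) \mapsto t$ thus has a filtered colimit, and the real task is to show that the induced morphism $\phi : \colim_{T \downarrow c} F_c \to c$ is an isomorphism; this is where being a \emph{regular} rather than merely ordinary generator enters.

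Writing $c$ as the coequalizer of $f, g : y \rightrightarrows z$ with $y, z$ coproducts of objects of $S$, each summand $s \to z \to c$ of $z$ yields an object of $T \downarrow c$, so the regular epi $z \to c$ factors through $\phi$, making $\phi$ itself epi. For the other direction, given a cocone from $F_c$ to some $c'$, the components along summands of $z$ assemble into an arrow $z \to c'$, which one checks coequalizes $f, g$: each summand $s_i$ of $y$, being finitely presentable, factors its two images in $z$ (via $f$ and $g$) through finite subcoproducts of $z$ lying in $T$, and compatibility of the cocone on these finite pieces forces the two composites $s_i \to z \to c'$ to agree. Thus $z \to c'$ descends to an arrow $c \to c'$. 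The main obstacle will be verifying naturality of this extension across all of $T$ rather than only $S$, which requires presenting each $t \in T$ as a finite colimit of $S$-objects and chasing compatibility through that presentation. Once done, $\phi$ is an iso, and \Cref{def.finpres} is satisfied.
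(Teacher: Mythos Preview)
The paper does not actually prove this proposition: the sentence preceding it says it ``is a consequence of \cite[1.11]{MR1294136} (via 0.5, 0.6 of op.\ cit.)'', and the \verb|\qedhere| in the statement signals that the citation is the whole proof. So there is nothing to compare your argument to beyond noting that you are supplying a direct proof where the paper is content to cite Ad\'amek--Rosick\'y.

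Your outline is essentially the standard argument behind the cited result, and the skeleton is correct: closing $S$ under finite colimits to get a set $T$ of finitely presentable objects, observing that $T\downarrow c$ is filtered, and showing the canonical map $\phi:\colim_{T\downarrow c}F_c\to c$ is an isomorphism. Two comments. First, a small mismatch with the paper's conventions: \Cref{def.finpres} restricts filtered diagrams to posets, while $T\downarrow c$ is merely a filtered category; you would need the standard reduction (this is exactly the \cite[1.5]{MR1294136} the paper alludes to) to finish. Second, the ``main obstacle'' you flag---checking that the induced map $c\to c'$ is compatible with the cocone on all of $T$, not just $S$---can be bypassed entirely. A regular generator in the sense of \Cref{def.reggen} is in particular a \emph{strong} generator (if $\mathcal C(s,f)$ is bijective for every $s\in S$ then $f$ is mono by the generator property, and the coequalizer presentation of the codomain produces a section), so it suffices to show $\mathcal C(s,\phi)$ is a bijection for each $s\in S$. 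This is immediate: $s$ is finitely presentable, so $\mathcal C(s,\colim_{T\downarrow c}F_c)\cong\colim_{(t,\alpha)}\mathcal C(s,t)$, and any $\alpha:s\to c$ is itself an object of $T\downarrow c$, with $\id_s$ witnessing surjectivity and the filtered structure giving injectivity. This avoids the diagram chase through finite-colimit presentations of arbitrary $t\in T$ that you anticipate.
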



\section{The category of CQG algebras is finitely presentable}\label{se.cqgfinpres}

This section is devoted to proving the result in the title:

\begin{theorem}\label{th.cqgfinpres}
The category $\cqg$ is finitely presentable.
\end{theorem}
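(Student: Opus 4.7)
The plan is to verify the hypotheses of \Cref{prop.finpres} for the family
\[
S=\{A_u(Q)\ :\ Q\in GL_n(\CC)\text{ positive},\ n\ge 1\}
\]
of \Cref{ex.auq} (chosen up to isomorphism so as to form a set). Three tasks remain: cocompleteness of $\cqg$; verification that $S$ is a regular generator; and finite presentability of each $A_u(Q)$.

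\textbf{Cocompleteness.} It suffices to construct small coproducts and coequalizers. Coequalizers of $f,g:A\tto B$ are given by $B/I$, where $I$ is the smallest Hopf $*$-ideal containing $\{f(a)-g(a):a\in A\}$; \Cref{rem.quotientCQG} ensures the quotient is CQG. Coproducts are less direct: for a family $\{A_i\}_{i\in I}$, I would build the coproduct in $\cqg$ starting from the free $*$-product of the $A_i$ in the category of Hopf $*$-algebras, and then pass to the sub-Hopf-$*$-algebra spanned by iterated products of coefficients of the standard unitary comodules transported from the factors; this subobject is CQG and has the expected universal property with respect to CQG targets. (One may alternatively describe it via generators and relations: freely adjoin the unitary matrix generators of each $A_i$, imposing only the relations already present in the factors.)

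\textbf{Regular generator.} Every $A\in\cqg$ is the linear span of coefficient matrices of its unitary comodules. For each such unitary matrix $v=(v_{ij})\in M_n(A)$, the identity $S^2(v)=QvQ^{-1}$ holds for a uniquely determined positive $Q$, and \Cref{ex.auq} then produces a morphism $A_u(Q)\to A$ sending the standard generators to the $v_{ij}$. Taking the coproduct $Z$ in $\cqg$ of one copy of $A_u(Q)$ for each unitary coefficient matrix of $A$, we obtain a surjection $\pi:Z\epi A$. The kernel $I=\ker\pi$ is a Hopf $*$-ideal in $Z$, generated by elements each lying in the span of the coefficients of finitely many unitary matrices of $Z$; for each such generator we take a fresh copy of the appropriate $A_u(Q')$ with two morphisms into $Z$ whose difference realizes that generator, and assemble these into a coproduct $Y$ equipped with parallel arrows $f,g:Y\tto Z$. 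By construction $\langle f(y)-g(y):y\in Y\rangle = I$, so $A$ is the coequalizer of $f,g$.

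\textbf{Finite presentability.} By the universal property of \Cref{ex.auq}, $\cqg(A_u(Q),A)$ is in natural bijection with the set of matrices $v\in M_n(A)$ satisfying the finitely many $*$-polynomial identities encoding unitarity of $v$ and $Q^{1/2}\overline{v}Q^{-1/2}$. Filtered colimits in $\cqg$ are computed on underlying $*$-algebras (the increasing union of CQG subalgebras of a filtered system is itself CQG, since it is spanned by its unitary coefficient matrices), so for $A=\colim A_i$ any such $v$ has entries landing in some single $A_i$ and the finitely many defining identities already hold there. Hence $\cqg(A_u(Q),-)$ commutes with filtered colimits.

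\textbf{Main obstacle.} The most delicate step is the regularity clause: a Hopf $*$-ideal in a free CQG algebra $Z$ is not itself of the form ``image of a map between objects of $\cqg$,'' so one has to encode its generators indirectly, by packaging each generator as a difference of two CQG morphisms out of a universal $A_u(Q')$. The second nontrivial point is the concrete construction of coproducts in $\cqg$, where the $*$-algebraic free product must be refined (not merely taken) in order to remain CQG while retaining universality.
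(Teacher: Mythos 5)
Your overall strategy is exactly the paper's (cocompleteness, plus \Cref{prop.finpres} applied to the set of $A_u(Q)$'s), but two of the three pillars have genuine holes. The finite presentability step rests on a false bijection: $\cqg(A_u(Q),A)$ is \emph{not} the set of matrices $v\in M_n(A)$ satisfying the unitarity relations for $v$ and $Q^{1/2}\overline{v}Q^{-1/2}$ --- those classify $*$-algebra maps $A_u(Q)\to A$, not Hopf $*$-algebra maps. (Take $A=\CC$: every unitary scalar matrix satisfies the relations, but only the identity matrix is a unitary \emph{coefficient} matrix, i.e.\ satisfies $\varepsilon(v_{ij})=\delta_{ij}$ and $\Delta(v_{ij})=\sum_k v_{ik}\otimes v_{kj}$.) What the finitely many $*$-polynomial relations buy you is finite presentability of $A_u(Q)$ in $\astar$, which yields injectivity of the comparison map $\varinjlim\cqg(A_u(Q),A_j)\to\cqg(A_u(Q),\varinjlim A_j)$. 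For surjectivity you must show that the factorization $f_j:A_u(Q)\to A_j$, a priori only a $*$-algebra map, intertwines comultiplications for $j$ large enough; since the two sides of $\Delta_{A_j}\circ f_j=(f_j\otimes f_j)\circ\Delta$ live in $A_j\otimes A_j$, this requires knowing that $A\otimes A$ is the filtered colimit of the $A_j\otimes A_j$ --- the content of the paper's \Cref{le.tensorsquare} --- which your argument skips entirely.

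The regular-generator step likewise asserts rather than proves its hardest point, which you yourself flag as the ``main obstacle.'' A CQG morphism $A_u(Q')\to Z$ is precisely a unitary coefficient matrix in $Z$ with prescribed behavior under $S^2$, so ``two morphisms whose difference realizes that generator'' means exhibiting, for a generating set of $\ker\pi$, \emph{pairs of unitary coefficient matrices of $Z$ with the same $S^2$-conjugator and equal image under $\pi$} whose entrywise differences generate the kernel; it is not clear this can be done for an arbitrary generating set. The paper does it for the specific multiplication-table relations $u^\alpha_{ij}u^\beta_{kl}=\pi^{-1}(\ell^{\alpha\beta}_{ik,jl})$: the left-hand sides are entries of the Kronecker product $u^\alpha u^\beta$ (manifestly a unitary coefficient matrix conjugated by $Q^\alpha\otimes Q^\beta$), and the nontrivial claim --- that the lifts $\pi^{-1}(\ell^{\alpha\beta}_{ik,jl})$ also form such a matrix, proved by transporting unitarity and the $S^2$-condition along the coalgebra isomorphism $\pi|_C$ --- is what makes $g$ well defined. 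Without that argument $g$ does not exist. A minor further point: the coproduct in $\cqg$ is simply the free product of the underlying $*$-algebras, with no need to cut down to a subalgebra, since products of unitary coefficient matrices are again unitary coefficient matrices and hence the free product is already spanned by them.
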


The main tool in the proof is \Cref{prop.finpres}, according to which cocompleteness (by now well known) is first on the agenda.

\begin{proposition}\label{prop.cqgcoco}
$\cqg$ is cocomplete. 
\end{proposition}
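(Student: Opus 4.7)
The plan is the standard reduction of cocompleteness to the existence of small coproducts and coequalizers, with the trivially-structured Hopf $*$-algebra $\CC$ serving as the initial object of $\cqg$. Coequalizers are the easy half: given parallel arrows $f,g\colon A\tto B$ in $\cqg$, I would take the quotient of $B$ by the Hopf $*$-ideal generated by $\{f(a)-g(a)\mid a\in A\}$. This is the coequalizer already in the larger category of Hopf $*$-algebras, and it is CQG by \Cref{rem.quotientCQG} since it is a Hopf $*$-algebra quotient of the CQG algebra $B$; the universal property descends to $\cqg$ without modification.

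Coproducts are the substantive part. Given a family $(A_i)_{i\in I}$ in $\cqg$, the candidate is the free product $\ast_i A_i$ in $*$-algebras, equipped with the Hopf $*$-algebra structure determined by the universal property of the free product: there is a unique $*$-algebra morphism $\Delta\colon \ast_i A_i\to (\ast_i A_i)\otimes(\ast_i A_i)$ extending each $\Delta_{A_i}$ (precomposed with the obvious inclusion), and analogously for the counit and antipode (the latter via the universal property applied to the opposite algebra). The content is verifying that $\ast_i A_i$ is CQG. The key observation is that if $u\in M_n(A_i)$ and $v\in M_m(A_j)$ are unitary coefficient matrices, then the $nm\times nm$ matrix with entries $u_{pq}v_{rs}$ factors as the product $(u\otimes 1_m)(1_n\otimes v)$ of two visibly unitary amplifications in $M_{nm}(\ast_i A_i)$, and hence is itself unitary, realizing the tensor-product corepresentation $u\otimes v$. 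Iterating, any monomial $u^{(1)}_{p_1q_1}u^{(2)}_{p_2q_2}\cdots u^{(k)}_{p_kq_k}$, with successive $u^{(s)}$ drawn from matrix coefficients of unitary corepresentations of distinct factors, appears as an entry of a unitary coefficient matrix in $M_N(\ast_i A_i)$. Since such monomials span $\ast_i A_i$ as a vector space, the CQG condition holds.

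The main obstacle to anticipate is precisely this non-commutativity issue: entries $u_{pq}$ and $v_{rs}$ from distinct factors do not commute in $\ast_i A_i$, so it is not a priori clear that $(u_{pq}v_{rs})$ is a unitary matrix at all; the amplification identity above is the little trick that bridges the gap. Once it is in hand, verifying the universal property of $\ast_i A_i$ as a coproduct in $\cqg$ reduces to the universal property of the free product in $*$-algebras, since any $\cqg$-cone $(A_i\to C)$ determines a unique $*$-algebra map $\ast_i A_i\to C$, which automatically intertwines $\Delta$, $\varepsilon$ and $S$ by uniqueness in the very universal properties used to construct those structure maps on $\ast_i A_i$.
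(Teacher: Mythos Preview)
Your proof is correct and follows the same plan as the paper: reduce cocompleteness to coproducts and coequalizers, build both at the level of underlying $*$-algebras, and invoke \Cref{rem.quotientCQG} for the latter. The only difference is cosmetic: where the paper cites \cite[Theorem 1.1]{MR1316765} for the coproduct and leaves the details to the reader, you spell out Wang's argument explicitly, including the amplification identity $(u\otimes 1_m)(1_n\otimes v)$ that makes the tensor product of unitary coefficient matrices unitary in the free product.
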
 
\begin{proof}
It is enough to show that the category has coproducts and coequalizers of parallel pairs of arrows \cite[V.2.1]{MR1712872}. Both will be constructed as simply the colimits of the underlying diagrams of $*$-algebras (i.e. coalgebra structures play no role in the construction of colimits). 

Coproducts are essentially constructed in \cite[Theorem 1.1]{MR1316765}. That result is concerned with the $C^*$-algebraic version (constructing coproducts in the category $\cstarqg$), but as remarked by Wang at the end of \cite[$\S$1]{MR1316765}, the algebraic version holds as well. Given a set of CQG agebras, the universal property of the coproduct of underlying algebras gives this coproduct the extra structure that will make it into a CQG algebra; we leave the details to the reader.    

To construct coequalizers, let $f,g:A\to B$ be morphisms of CQG algebras. The ideal $I$ generated by the elements $f(a)-g(a)$, $a\in A$ is in fact a coideal, as well as invariant under $*$. The latter assertion is trivial, so let us focus on $I$ being a coideal. Compatibiity of $f$ and $g$ with counits says that $f(a)-g(a)$ is annihilated by $\varepsilon_B$. On the other hand, the familiar computation \begin{alignat*}{-1} \Delta_B(f(a)-g(a)) =&\ (f\otimes f)(\Delta_A(a)) - (g\otimes g)(\Delta_A(a))\\ =&\ f(a_1) \otimes f(a_2) - g(a_1) \otimes g(a_2) \\  =&\ f(a_1) \otimes (f-g)(a_2) + (f-g)(a_1)\otimes g(a_2)\in B\otimes I+I\otimes B \end{alignat*} shows that $I$ plays well with the comultiplication. It follows that the coequalizer of $f$ and $g$ in $\astar$ is a quotient Hopf $*$-algebra of $B$, and hence a CQG algebra by \Cref{rem.quotientCQG}.
\end{proof}

The plan now is to show that the set $S$ consisting of the CQG algebras $A_u(Q)$ of \Cref{ex.auq} (for all possible positive operators $Q$, of all possible sizes) satisfies the hypotheses of \Cref{prop.finpres}: Every $A_u(Q)$ is finitely presentable in $\cqg$ in the sense of \Cref{def.finpres}, and $S$ is a regular generator. We start with the former.

\begin{proposition}\label{prop.AuQfinpres}
For any positive $Q\in GL_n(\mathbb C)$, the object $A=A_u(Q)\in\cqg$ is finitely presentable.  
\end{proposition}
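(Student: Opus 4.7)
The plan is to exploit the universal property of $A=A_u(Q)$ from \Cref{ex.auq}: a morphism $A\to B$ in $\cqg$ corresponds bijectively to a unitary matrix $v=(v_{ij})\in M_n(B)$ satisfying the additional identity $S_B^2(v)=QvQ^{-1}$. This exhibits the functor $\cqg(A,-)$ as the subfunctor of $B\mapsto B^{n^2}$ cut out by finitely many $*$-polynomial conditions (the unitarity relations of $v$ together with the $S^2$-equation, in which $S$ denotes the antipode of $B$). Since filtered colimits in $\cat{Set}$ commute with finite limits, the problem will reduce to verifying that the underlying $*$-algebra functor on $\cqg$ preserves filtered colimits---that filtered colimits in $\cqg$ can be computed at the level of underlying $*$-algebras, with comultiplication, counit, antipode and $*$-structure also passing to the colimit componentwise.

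To establish this I would construct filtered colimits in $\cqg$ explicitly. Given a filtered diagram $(B_j)_j$ of CQG algebras, form the filtered colimit $C$ of the underlying $*$-algebras. Its comultiplication, counit, antipode and $*$-structure are induced by universality: this is a standard consequence of filtered colimits in $\cat{Set}$ commuting with finite products and equalizers, so that all equational Hopf $*$-algebra axioms pass to $C$. One then checks that $C$ is actually a CQG algebra, since the image in $C$ of any unitary coefficient matrix from some $B_j$ remains a unitary coefficient matrix, and these images span $C$ because the $B_j$ are themselves spanned by their unitary coefficient matrices. Finally, a direct cocone argument verifies the universal property, so this is the filtered colimit in $\cqg$ and it coincides with the $*$-algebra colimit.

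With that in hand, finite presentability of $A_u(Q)$ is almost automatic: any morphism $\varphi:A\to\colim B_j$ is determined by the $n^2$ elements $\varphi(u_{ij})$, which by filteredness lift to some $B_j$; the finitely many relations they must satisfy hold at a sufficiently late stage, so by the universal property of \Cref{ex.auq} one obtains a lift $A\to B_j$. Uniqueness of such lifts up to later-stage equality is analogous, using only the finiteness of the generating set $\{u_{ij}\}$. The main obstacle is really the first, setup step---verifying that filtered colimits in $\cqg$ live at the $*$-algebra level; past that, the proposition is a routine application of the finitely-many-generators-and-relations presentation of $A_u(Q)$.
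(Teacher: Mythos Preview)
Your proposal has a real gap in the very first move: a $\cqg$-morphism $A_u(Q)\to B$ is \emph{not} merely a unitary $v\in M_n(B)$ with $S_B^2(v)=QvQ^{-1}$. As \Cref{ex.auq} states, one must also require that $v$ be a \emph{coefficient matrix}, i.e.\ that $\Delta_B(v_{ij})=\sum_k v_{ik}\otimes v_{kj}$. Without that comultiplicativity condition the assignment $u_{ij}\mapsto v_{ij}$ is only a $*$-algebra map, not a morphism in $\cqg$, and the $S^2$-equation alone neither implies nor substitutes for it.

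This missing condition is precisely the one that carries the difficulty. The unitarity relations, and even the $S^2$-equation (once you know the antipodes commute with the structure maps), are equalities living in $B_j$ itself, so they do pass to a sufficiently late stage by the standard filtered-colimit argument in $\astar$. But $\Delta_{B_j}(\tilde v_{ij})=\sum_k \tilde v_{ik}\otimes\tilde v_{kj}$ is an equality in $B_j\otimes B_j$; to deduce it at some finite stage from its validity in $(\varinjlim B_j)\otimes(\varinjlim B_j)$ you need to know that the canonical map $\varinjlim_j(B_j\otimes B_j)\to(\varinjlim_j B_j)^{\otimes 2}$ is an isomorphism. Your appeal to ``filtered colimits in $\cat{Set}$ commuting with finite products and equalizers'' does not furnish this, since the tensor product is not a finite categorical product. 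The paper isolates exactly this point as \Cref{le.tensorsquare} and uses it in the same way you would have to: lift first to a $*$-algebra map $A\to A_j$ via finite presentability in $\astar$, then argue that comultiplication is preserved at a late enough stage. Once you supply that lemma, your outline collapses onto the paper's proof.
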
 
\begin{proof}
Let $(J,\le)$ be a filtered poset, and let $A_j$, $j\in J$ implement a functor $J\to\cqg$ by means of CQG algebra morphisms $\iota_{j'j}:A_j\to A_{j'}$ for $j\le j'$. Denote also by $\displaystyle \iota_i:A_i\to B=\varinjlim_jA_j$ the structural morphisms into the colimit. We have to show that the canonical map 
\begin{equation}\label{eq.Aufinpres} 
	\varinjlim_j\cqg(A,A_j) \to \cqg(A,B) 
\end{equation} 
is a bijection. 

It is clear from the description in \Cref{ex.auq} that $A$ is finitely presented as a $*$-algebra, and is hence a finitely presentable object in $\astar$ (\cite[3.11]{MR1294136} with $\lambda=\aleph_0$). In conclusion, the canonical map 
\begin{equation}\label{eq.algpres} 
	\varinjlim_j\astar(A,A_j)\to\astar(A,B) 
\end{equation} 
is bijective, and the injectivity of \Cref{eq.Aufinpres} follows from the commutative square
\[
 \begin{tikzpicture}[anchor=base]
   \path (0,0) node (1) {$\varinjlim\cqg(A,A_j)$} +(4,0) node (2) {$\cqg(A,B)$} +(0,-2) node (4) {$\varinjlim\astar(A,A_j)$} +(4,-2) node (3) {$\astar(A,B)$,}; 
   \draw[->] (1) -- (2);
   \draw[->] (2) -- (3);
   \draw[->] (1) -- (4);
   \draw[->] (4) -- (3);
 \end{tikzpicture}
\]
where the vertical arrows are the obvious inclusions. 

To prove that \Cref{eq.Aufinpres} is surjective, fix a CQG algebra morphism $f:A\to B=\varinjlim A_j$. It is, in particular, a morphism in $\astar$, so by the surjectivity of \Cref{eq.algpres}, it factors through a unital $*$-algebra morphism $f_i:A\to A_i$ for some $i\in J$, and hence through $f_j=\iota_{ji}\circ f_i:A\to A_j$ for $j\ge i$. For such $j$, consider the diagram
\[
	\begin{tikzpicture}[anchor=base]
   	\path (0,0) node (1) {$A$} +(3,0) node (2) {$A_j$} +(6,0) node (3) {$B$} +(6,-2) node (4) {$B\otimes B$,} +(3,-2) node (5) {$A_j\otimes A_j$} +(0,-2) node (6) {$A\otimes A$}; 
   	\draw[->] (1) -- (2) node[pos=.5,auto] {$\scriptstyle f_j$};
   	\draw[->] (2) -- (3) node[pos=.5,auto] {$\scriptstyle \iota_j$};
   	\draw[->] (6) -- (5) node[pos=.5,auto] {$\scriptstyle f_j\otimes f_j$};
   	\draw[->] (5) -- (4) node[pos=.5,auto] {$\scriptstyle \iota_j\otimes\iota_j$};
   	\draw[->] (1) -- (6);
   	\draw[->] (2) -- (5);
   	\draw[->] (3) -- (4);
  \end{tikzpicture}
\]
where the vertical maps are comultiplications. The commutativity of the outer rectangle is nothing but the preservation of coproducts by $f=\iota_j\circ f_j$, while the right hand square commutes because $\iota_j:A_j\to B$ is by definition a colimit in $\cqg$. It follows that the two $J$-indexed systems of morphisms $\Delta_{A_j}\circ f_j$ and $(f_j\otimes f_j)\circ\Delta_A$ become equal upon composing further with $\iota_j\otimes\iota_j$. The fact that they must then be equal for sufficiently large $j$ follows from the next lemma, which says essentially that $\iota_j\otimes\iota_j$ make $B\otimes B$ the colimit of the diagram consisting of the maps $\iota_{j'j}\otimes\iota_{j'j}$, together with the injectivity of \[ \varinjlim_j \astar(A,A_j\otimes A_j)\longrightarrow \astar(A,\varinjlim_j A_j\otimes A_j) \] resulting from the finite presentability of $A$ in $\astar$.    
\end{proof}

\begin{lemma}\label{le.tensorsquare}
The tensor square endofunctor $A\mapsto A\otimes A$ on $\astar$ preserves filtered colimits. 
\end{lemma}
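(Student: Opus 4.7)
The plan is to reduce the statement to the fact that tensor product of vector spaces preserves colimits in each variable, combined with the cofinality of the diagonal $J \to J\times J$ for filtered $J$.

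First I would observe that filtered colimits in $\astar$ are created by the forgetful functor to complex vector spaces. Given a filtered diagram $(A_j, \iota_{j'j})$ of unital $*$-algebras, the underlying vector-space colimit $B = \varinjlim_j A_j$ acquires a canonical unital $*$-algebra structure: to multiply two elements of $B$ one represents them by elements of some $A_j$ and $A_{j'}$, pushes them forward to a common $A_k$ with $k \ge j, j'$, and multiplies there; filteredness makes the result independent of choices, and the unit and involution are handled the same way. One checks routinely that $B$ together with the maps $\iota_j$ is the colimit in $\astar$. It therefore suffices to exhibit a natural isomorphism $\varinjlim_j (A_j \otimes A_j) \cong B \otimes B$ at the vector-space level, the $*$-algebra structures matching automatically since both sides receive compatible unital $*$-algebra maps from the system $(A_j \otimes A_j)_j$.

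Second, tensor product of complex vector spaces commutes with colimits in each argument separately, since $-\otimes V$ is a left adjoint to $\Hom(V,-)$. Applied twice, together with the usual Fubini for iterated colimits, this yields
\[ B \otimes B \;\cong\; \varinjlim_i \varinjlim_j (A_i \otimes A_j) \;\cong\; \varinjlim_{(i,j)\in J\times J} (A_i\otimes A_j). \]

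Third, for filtered $J$ the diagonal $\Delta\colon J \to J\times J$ is cofinal: given $(i,j) \in J\times J$, any upper bound $k \ge i, j$ yields a morphism $(i,j) \to (k,k) = \Delta(k)$, and any two such arrows can be joined by passing to a further upper bound, so the comma category $(i,j)\downarrow\Delta$ is nonempty and connected. Hence $\varinjlim_{(i,j)}(A_i \otimes A_j) \cong \varinjlim_k (A_k \otimes A_k)$, and composing with the previous display produces exactly the canonical comparison $\varinjlim_j (A_j \otimes A_j) \to B \otimes B$. The only delicate point is this cofinality argument, which really does require $J$ to be filtered; for non-filtered colimits the tensor square functor fails to commute, as one sees already at the level of coproducts.
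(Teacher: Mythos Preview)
Your proof is correct and follows essentially the same route as the paper's: reduce to vector spaces via creation of filtered colimits by the forgetful functor, use that $-\otimes V$ is a left adjoint together with Fubini for iterated colimits to identify $B\otimes B$ with $\varinjlim_{(i,j)}A_i\otimes A_j$, and then invoke cofinality of the diagonal $J\to J\times J$ for filtered $J$. The only cosmetic difference is that you verify cofinality via the connectedness of comma categories whereas the paper, working with posets throughout, states it directly in terms of majorization.
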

\begin{proof}
Working with $*$-algebras is of little importance here: The forgetful functor from any finitary variety of algebras to the category $\cat{Set}$ of sets creates filtered colimits. We refer again to \cite[3.A]{MR1294136} for background on varieties of algebras. The claim just made can be proven either by realizing the variety as the Eilenberg-Moore category of a finitary monad (i.e. one which preserves filtered colimits) on $\cat{Set}$ \cite[3.18]{MR1294136}, or directly. It follows that it is enough to prove the analogous statement in the category $\cat{Vec}$ of complex vector spaces. 

Let $(J,\le)$ be a filtered poset, $\iota_{j'j}:V_j\to V_{j'}$ a functor from it to $\cat{Vec}$, and $\displaystyle V=\varinjlim_j V_j$. We have to show that the canonical map $\displaystyle \varinjlim_j (V_j\otimes V_j)\to V\otimes V$ is an isomorphism. 

Let $(J\times J,\le)$ be the cartesian square of the category $(J,\le)$; it is simply the poset structure on the set $J\times J$ defined by $(i,j)\le (i',j')$ iff $i\le i'$ and $j\le j'$. Consider the functor $F:(J\times J,\le)\to\cat{Vec}$ given by $(i,j)\mapsto V_i\otimes V_j$ (with the obvious action on morphisms). For any vector space $W$, the endofunctor $W\otimes\bullet:\cat{Vec}\to\cat{Vec}$ is left adjoint to $\Hom(W,\bullet)$, and hence cocontinuous. Applying this observation first to $V_i$ and then to $V$, we get the last two isomorphisms in the chain \begin{equation}\label{eq.VotimesV} 
	\varinjlim_{i,j}(V_i\otimes V_j)\cong \varinjlim_i\varinjlim_j(V_i\otimes V_j)\cong \varinjlim_i (V_i\otimes V)\cong V\otimes V. 
\end{equation} 
The first one, on the other hand, is the usual Fubini-type separation of variables for colimits  (\cite[IX.8]{MR1712872}).
  
The original poset $J$ sits diagonally inside $J\times J$ as the set of pairs $(j,j)$. Moreover, the fact that $J$ is filtered translates to $J$ being \define{cofinal} in $J\times J$ in the sense that everyone in the latter is majorized by someone in the former. But it then follows \cite[IX.3.1]{MR1712872} that the canonical map \[ \varinjlim_j (V_j\otimes V_j)=\varinjlim F|_J\longrightarrow \varinjlim F = \varinjlim_{i,j}(V_i\otimes V_j) \] is an isomorphism. Composing it with \Cref{eq.VotimesV} finishes the proof.  
\end{proof}

\begin{remark}
We mentioned briefly at the end of \Cref{subse.cqg} that the category $\cstarqg$ is not as friendly as $\cqg$. \Cref{le.tensorsquare}, for example, is somewhat problematic. The problem with the above proof is that it hinges on functors of the form $A\otimes\bullet:\astar\to\astar$ preserving filtered colimits; I do not know whether the analogous result holds for the category $\cstar$ of unital $C^*$-algebras with the minimal tensor product (which is what would be needed to make the proof work verbatim for $\cstarqg$). 

Some partial results (which we do not prove here) are that (a) for a $C^*$-algebra $A$, the functor $A\otimes\bullet$ does preserve filtered colimits of injections, (b) the same functor preserves all filtered colimits provided $A$ is an \define{exact} $C^*$-algebra in the sense of \cite{MR1271145} (this simply means that minimal tensoring with $A$ preserves short exactness of sequences in $\cstaroh$), and (c) the \define{maximal} tensor product with $A$ does preserve all filtered colimits. This suggests that trying to adapt \Cref{le.tensorsquare} to $\cstar$ adds a layer of difficulty, in that one has to deal with issues like nuclearity and exactness.    
\end{remark}

The last piece of the puzzle is

\begin{proposition}\label{prop.reggen}
The set $S$ of all $A_u(Q)$ is a regular generator in $\cqg$. 
\end{proposition}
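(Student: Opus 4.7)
Given any $A\in\cqg$, the plan is to build coproducts $z,y$ of objects from $S$ together with parallel arrows $f,g\colon y\to z$ whose coequalizer is $A$: $z$ will supply enough generators from $S$ to surject onto $A$, and $y$ will parametrize the relations needed to cut $A$ out of that cover. Both coproducts exist by \Cref{prop.cqgcoco}.

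For $z$, I index over all pairs $(V,e)$, where $V$ is a finite-dimensional unitary $A$-comodule and $e$ is an orthonormal basis. Each such pair yields a unitary coefficient matrix $u^{(V,e)}\in M_n(A)$ satisfying $S^2(u^{(V,e)})=Q_{V,e}\,u^{(V,e)}Q_{V,e}^{-1}$ for a canonical positive $Q_{V,e}$ (a standard consequence of Woronowicz's modular theory for CQG algebras), and hence a $\cqg$ morphism $\eta_{(V,e)}\colon A_u(Q_{V,e})\to A$ via the universal property recalled in \Cref{ex.auq}. The coproduct $z=\coprod A_u(Q_{V,e})$ then assembles these into a canonical $\pi\colon z\to A$, which is surjective because a CQG algebra is spanned by matrix coefficients of its unitary comodules. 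For $y$, I index over all quadruples $(n,Q,h_1,h_2)$ with $Q\in GL_n(\CC)$ positive and $h_1,h_2\colon A_u(Q)\to z$ parallel $\cqg$ morphisms satisfying $\pi h_1=\pi h_2$; set $y=\coprod A_u(Q)$ over this index, and define $f,g\colon y\to z$ to restrict to $h_1$ and $h_2$ respectively on each component. Tautologically $\pi f=\pi g$, so $\pi$ factors through the coequalizer, inducing a surjection $\mathrm{coeq}(f,g)\twoheadrightarrow A$.

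The main obstacle is the reverse factorization: showing that any $\psi\colon z\to B$ in $\cqg$ with $\psi f=\psi g$ must descend through $\pi$, equivalently that $\ker\pi$ lies in the two-sided $*$-ideal generated by the differences $h_1(x)-h_2(x)$. For this I would exploit that $z$ is itself a CQG algebra, so any element of $z$ lies in the span of matrix coefficients of some finite-dimensional unitary $z$-comodule $W$; pushing $W$ forward along $\pi$ produces a unitary $A$-comodule $\pi_*W$ which, with any chosen orthonormal basis, itself indexes a summand of $z$. Comparing the two resulting $\cqg$ morphisms $A_u(Q_W)\to z$ — one landing on the generator labeled by $(\pi_*W,\text{basis})$, the other on the coefficient matrix $u^W$ — yields a pair $(h_1,h_2)$ belonging to the index of $y$, whose componentwise difference encodes a relation in $\ker\pi$. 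Varying $W$ over direct sums $V\oplus V'$, tensor products $V\otimes V'$, and unitary changes of basis produces enough such pairs to cover the atomic relations (intertwiners between simple $A$-comodules, tensor-product decompositions, basis rotations); cosemisimplicity of CQG algebras (\Cref{subse.cqg}) then ensures that these generate $\ker\pi$ as a $*$-ideal.
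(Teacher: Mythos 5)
Your argument is correct in substance but takes a genuinely different route from the paper's. The paper uses the \emph{minimal} presentation: $Z$ is the coproduct of $A_u(Q^\alpha)$ over a transversal $\widehat A$ of \emph{simple} unitary comodules only, $Y$ is indexed by pairs $(\alpha,\beta)\in\widehat A\times\widehat A$, and the two arrows send the generators of $A_u(Q^{\alpha\beta})$ to the product $u^\alpha_{ij}u^\beta_{kl}$ computed in $Z$ and to the unique lift to $Z$ of the product $v^\alpha_{ij}v^\beta_{kl}$ computed in $A$, respectively; since the span of the standard generators of $Z$ is already a copy of $A$ as a coalgebra, the coequalizer is visibly $A$ once the multiplication table is imposed. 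You instead take the \emph{maximal}, tautological presentation: $z$ indexed by all based unitary comodules and $y$ by the entire $S$-relative congruence of $\pi$, so that the surjection $\mathrm{coeq}(f,g)\to A$ is free and all the work lies in showing the congruence is large enough. Your mechanism for producing relations --- push a unitary $z$-comodule $W$ forward along $\pi$ and compare the two maps $A_u(Q_W)\to z$ hitting $u^W$ and the standard generators of the summand indexed by $(\pi_*W,e)$ --- is sound (the two maps do share a common source because CQG morphisms intertwine the Woronowicz characters, so the modular choice gives $Q_W=Q_{\pi_*W,e}$), and it does yield enough relations. Two points should be tightened. First, ``all pairs $(V,e)$'' is a proper class; index instead by comodule structures on the spaces $\CC^n$. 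Second, the closing appeal to cosemisimplicity should be replaced by the explicit injectivity argument it is standing in for: the relations $u^W\equiv u^{(\pi_*W,e)}$ show that $z/I$ is spanned by the classes of the standard generators; the relations obtained from direct sums of standard comodules of the simple summands, rotated by a decomposition unitary, show that each $[u^{(V,e)}_{ij}]$ is a linear combination of the $[u^{(V^\alpha,e^\alpha)}_{kl}]$ for a fixed transversal of simples; and these last classes map to the linearly independent Peter--Weyl basis of $A$, whence $z/I\to A$ is injective. What your version buys is genericity --- it is essentially the canonical-congruence argument and would transfer to any category whose objects are spanned by coefficients of comodules induced from $S$; what the paper's version buys is an economical, explicit presentation of $A$ by its simple comodules and their fusion data.
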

\begin{proof}
According to \Cref{def.reggen}, we have to show that an arbitrary CQG algebra $A$ is the coequalizer of two arrows $f,g:Y\to Z$ between coproducts of $A_u(Q)$'s. 

Let $V^\alpha$ be representatives for the set $\widehat A$ of unitary simple comodules of $A$. Then, $A$ is the direct sum of the matrix coalgebras $C^\alpha$ spanned by the unitary coefficient matrices $v^\alpha=(v^\alpha_{ij})$ with respect to orthonormal bases $(e^\alpha_i)$ of $V^\alpha$.

The squared antipode $S^2$ conjugates every matrix $v^\alpha$ by some positive operator $Q^\alpha$ \cite[11.2.3, Lemma 30]{MR1492989}, and hence, by the universality property of the $A_u(Q)$'s as cited in \Cref{ex.auq}, the assignment $u_{ij}\mapsto v^\alpha_{ij}$ defines a CQG algebra morphisms $A_u(Q^\alpha)\to A$. Moreover, the resulting map 
\begin{equation}\label{eq.reggen1}
	\pi:\coprod_{\widehat A} A_u(Q^\alpha)\to A
\end{equation} 
is surjective. The left hand side of this expression will be our $Z$. For $\alpha\in\widehat A$, we denote by $u^\alpha$ the standard coefficient matrix in $A_u(Q^\alpha)$ (earlier in this paragraph, where we reasoned one $\alpha$ at a time, it was denoted simply by $u$).   

The two morphisms $f,g:Y\to Z$ that we are looking for (and whose coequalizer \Cref{eq.reggen1} should be) ought to somehow recover the relations of $A$, i.e. the multiplication table with respect to the basis $(v^\alpha_{ij})$ of $A$. 

To define $f$, fix $\alpha,\beta\in \widehat A$. A simple calculation shows that $u^\alpha u^\beta=(u^\alpha_{ij}u^\beta_{kl})_{ik,jl}$ is a unitary coefficient matrix of $Z=\coprod A_u(Q^\alpha)$, which the squared antipode of this CQG algebra conjugates by $Q^{\alpha\beta}=Q^\alpha\otimes Q^\beta$ (this is just notation). It follows that there is a unique morphism $A_u(Q^{\alpha\beta})\to Z$ defined by $u_{ik,jl}\mapsto u^\alpha_{ij}u^\beta_{kl}$. Putting all of these together for all pairs of comodules, we get
\begin{equation*}\label{eq.reggen2}
	f:Y=\coprod_{\widehat A\times\widehat A} A_u(Q^{\alpha\beta})\to \coprod A_u(Q^\alpha)=Z.
\end{equation*} 
As before, since we need to distinguish between the various coefficient matrices in $Y$, we denote them by $u^{\alpha\beta}$ in the obvious way. 

We now start on our way towards constructing $g:Y\to Z$. The same game as in the previous paragraph can be played in $A$: For any $\alpha,\beta\in\widehat A$, $v^\alpha v^\beta=(v^\alpha_{ij}v^\beta_{kl})_{ik,jl}$ is the unitary coefficient matrix with respect to the tensor product basis $e^\alpha_i\otimes e^\beta_k$ of the tensor product Hilbert space $V^\alpha\otimes V^\beta$. But now, since we are in $A$, the elements of this matrix can be expressed as linear combinations of $v^\gamma_{ij}$'s. In order to avoid cumbersome indices on the coefficients of such linear combinations, we simply write \[ v^\alpha_{ij} v^\beta_{kl}=\ell^{\alpha\beta}_{ik,jl}\in\bigoplus_\gamma C^\gamma, \] where $C^\gamma$ is the matrix coalgebra corresponding to $\gamma\in\widehat{A}$, and $\gamma$ ranges over the simple comodules appearing in the decomposition of $V^\alpha\otimes V^\beta$. 

Now, because the restriction of $\pi$ to the direct sum $C\le Z$ of matrix coalgebras spanned by $u^\alpha\subset Z$ is one-to-one (in fact, this restriction is by definition an isomorphism onto $A$), the elements $\ell^{\alpha\beta}_{ik,jl}$ defined above lift uniquely to elements of $C$, and we slightly abusively denote these lifts by $\pi^{-1}(\ell^{\alpha\beta}_{ik,jl})$. I claim that for fixed $\alpha$ and $\beta$, these elements form a unitary coefficient matrix which $S^2$ conjugates by $Q^\alpha\otimes Q^\beta$. Indeed, all of these properties can be stated inside $C$ (without appealing to multiplication), using only the antipode and the $*$-structure (being unitary, for example, amounts to the antipode turning $\pi^{-1}(\ell^{\alpha\beta}_{ik,jl})$ into $\pi^{-1}(\ell^{\alpha\beta}_{jl,ik})^*$); since $\pi$ preserves both the antipode and the $*$ structure and its restriction to $C$ is a coalgebra isomorphism, the properties all lift from the $\ell$'s to the $\pi^{-1}(\ell)$'s. 

Finally, the claim just proven allows us to construct $g:Y\to Z$ by sending $u^{\alpha\beta}_{ik,jl}$ to $\pi^{-1}(\ell^{\alpha\beta}_{ik,jl})$. The coequalizer of $f$ and $g$ is the quotient of $Z$ by the relations imposing on $u^\alpha_{ij}$ the same multiplication table as that of the $v^\alpha_{ij}$'s, so it is now clear that this coequalizer is precisely $\pi:Z\to A$.  
\end{proof}

We can now put the last few results together:

\begin{proof of cqgfinpres}
We know from \Cref{prop.cqgcoco} that $\cqg$ is cocomplete, and from \Cref{prop.AuQfinpres,prop.reggen} that a set of finitely presentable objects forms a regular generator. The conclusion follows from \Cref{prop.finpres}. 
\end{proof of cqgfinpres}

\begin{remark}\label{rem.cqgab}
In essentially the same way, we can show that the category $\cqgab$ of commutative CQG algebras is finitely presentable. In this case, all distinctions between the algebraic and the $C^*$-algbraic vanish: The restriction of $\univ$ to $\cqgab$ is an equivalence onto the full subcategory $\cstarqgab$ of $\cstarqg$ consisting of commutative algebras. Moreover, $\cqgab$ (or $\cstarqgab$) is nothing but the opposite of the category of compact groups, with a compact group $G$ corresponding to the CQG algebra of representative functions on it. 

The only changes we need to make to the proofs in order to adapt the presentability result to $\cqgab$ are (a) substitute tensor products (of perhaps infinite families) for coproducts, and (b) use the set of CQG algebras associated to all unitary groups $U_n$ for a generator, instead of the $A_u(Q)$'s.  
\end{remark}

\begin{remark}
Although, strictly speaking, SAFT-ness would have sufficed for the purposes of \Cref{se.appl}, the finite presentability of $\cqg$ is interesting in its own right, as it is somewhat surprising: Given the close relationship between $\cqg$ and $\cstarqg$, discussed a little in \Cref{subse.cqg} above, one might think that the former category should look more or less like ``unital $C^*$-algebras with a lot of extra structure'', and hence should be at least as reluctant to be finitely presentable as the category $\cstar$ of unital $C^*$-algebras. However, this is not the case. 

There is a more general notion of presentability for categories (local presentability in the literature, e.g. \cite{MR1294136}) parametrized by a regular cardinal number, so that the technical term for `finitely presentable' is `$\aleph_0$-presentable'; the larger the cardinal, the weaker the notion. Now, it can be shown that $\cstar$ is $\aleph_1$-presentable but \define{not} finitely presentable. Worse still, the same is true in the commutative setting: Although the previous remark notes that $\cqgab$ is finitely presentable, the category of commutative unital $C^*$-algebras is $\aleph_1$, but not finitely presentable. 
\end{remark}


\section{\texorpdfstring{$\cstarqg$}{C*QG} is SAFT}\label{se.cstarqgsaft}

The main result of the section is the one just stated:

\begin{theorem}\label{th.cstarqgsaft}
The cateory $\cstarqg$ is SAFT. 
\end{theorem}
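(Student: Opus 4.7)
The plan is to verify the three conditions of SAFT-ness (\Cref{def.saft}) for $\cstarqg$, piggybacking on the corresponding facts for $\cqg$ via the adjunction $\univ\dashv\alg$ whenever possible. I would organize it as cocompleteness, then the existence of a small generator, then co-wellpoweredness.

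For \textbf{cocompleteness}, it suffices to produce coproducts and coequalizers. Coproducts are given by Wang's construction \cite[Thm.~1.1]{MR1316765}, which works at the level of $C^*$-algebras exactly as invoked in the proof of \Cref{prop.cqgcoco}. Coequalizers of $f,g:A\tto B$ I would build as $B/I$, where $I\subset B$ is the closed two-sided $*$-ideal generated by $\{f(a)-g(a):a\in A\}$; the proof of \Cref{prop.cqgcoco} gives a Sweedler-style identity showing $(f\otimes f-g\otimes g)(x\otimes y)\in I\otimes B+B\otimes I$ on elementary tensors, and continuity plus bilinearity extend this to all of $\Delta_A(A)$, so $\Delta_B$ descends to a coassociative $\bar\Delta:B/I\to (B/I)\otimes(B/I)$. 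The density conditions of \Cref{def.cstarqg}(2) are transported from $B$ to $B/I$ by the simple observation that a surjective $*$-homomorphism of $C^*$-algebras $\pi:B\epi B/I$ induces a surjection $\pi\otimes\pi:B\otimes B\to (B/I)\otimes (B/I)$ on minimal tensor products (its image is closed and contains the algebraic tensor square, hence everything).

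For a \textbf{generator}, I would take $S'=\{\univ(A_u(Q)):Q\in GL_n(\CC)_+,\ n\ge 1\}$ and transport the generating property of $\{A_u(Q)\}$ in $\cqg$ (as used in \Cref{prop.reggen}) through the adjunction. The crucial observation is that $\alg:\cstarqg\to\cqg$ is faithful: any two $\cstarqg$-morphisms $f,g:X\tto Y$ coincide on $\alg(X)$ only if they coincide on the dense subset $\alg(X)\subset X$ and hence globally. Given $f\ne g$ in $\cstarqg$, we thus have $\alg(f)\ne\alg(g)$, so some $h:A_u(Q)\to \alg(X)$ separates them; its adjunct $\tilde h:\univ(A_u(Q))\to X$ then separates $f$ from $g$ in $\cstarqg$.

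The main obstacle is \textbf{co-wellpoweredness}. My plan is as follows. Let $A\in\cstarqg$ and let $f:A\epi B$ be an epimorphism. The first step, and the real content, is to show that $f(A)$ is norm-dense in $B$; for if $f(A)$ had non-dense image then by standard $C^*$-algebra arguments (e.g.\ representing the cokernel of the inclusion $\overline{f(A)}\hookrightarrow B$ in two distinct ways, or factoring through the pushout $B\sqcup_{\overline{f(A)}}B$) one produces distinct parallel arrows out of $B$ agreeing after precomposition with $f$, contradicting that $f$ is epi. Granting density, $\alg(f)(\alg(A))$ is a dense sub-CQG algebra of $B$, and by the uniqueness of such (\cite[A.1]{MR1862084}) it equals $\alg(B)$. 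Consequently $\alg(B)$ is a quotient of $\alg(A)$ in $\cqg$, and by co-wellpoweredness of $\cqg$ (inherited from finite presentability, \Cref{th.cqgfinpres}) there are only set-many possibilities for the isomorphism class of $\alg(B)$. For each fixed CQG algebra $H$, the compact quantum groups having $H$ as dense sub-CQG algebra are classified by $C^*$-seminorms on $H$ dominated by the universal one (equivalently by closed Hopf $*$-ideals of $\univ(H)$), and these likewise form a set. Combining these two bounds yields a set of representatives for quotient objects of $A$ in $\cstarqg$, completing the verification. I expect the density step to be the only place requiring genuine $C^*$-analytic input; everything else is formal manipulation of the adjunction and the structure theory already assembled in \Cref{subse.cqg}.
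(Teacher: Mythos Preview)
Your treatment of cocompleteness and the generating set is correct and matches the paper's \Cref{prop.cstarqgcoco,prop.cstarqggen} essentially verbatim. The bound you give at the end of the co-wellpoweredness argument (once $\alg(f)$ is known to be surjective, there are only set-many choices for $\alg(B)$, and then set-many $C^*$-completions of each) is also fine and is how the paper concludes.

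The gap is exactly where you flag it, but your proposed arguments do not close it. First, the image of a unital $*$-homomorphism between $C^*$-algebras is already closed, so ``norm-dense'' is the same as ``surjective''; you are therefore trying to prove precisely \Cref{prop.epi}. Your pushout suggestion is circular: the two canonical maps $B\rightrightarrows B\sqcup_{f(A)}B$ differ if and only if the inclusion $f(A)\hookrightarrow B$ is not an epimorphism in $\cstarqg$, which is the injective special case of the very statement you are proving. And there is no off-the-shelf ``standard $C^*$-algebra argument'' to invoke: whether epimorphisms of unital $C^*$-algebras are surjective is a genuinely delicate question, so you cannot import the result from $\cstar$.

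The paper's proof of \Cref{prop.epi} does not proceed by $C^*$-analytic input at all; it is Hopf-algebraic. One reduces to $f$ injective, passes to $\alg$ and to the reduced completions $\red$, and uses the fact that an inclusion of cosemisimple Hopf algebras splits as a bimodule map to see that the algebraic pushout $B^a\coprod_{A^a}B^a$ is strictly larger than $B^a$ whenever $A^a\subsetneq B^a$. A diagram chase involving $\red$ (which reflects isomorphisms) then transports this back to show the $C^*$-pushout map $\iota$ is not onto. This cosemisimplicity/splitting ingredient is the missing idea in your outline.
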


We prove the three properties required for SAFT-ness (\Cref{def.saft}) separately.

\begin{proposition}\label{prop.cstarqgcoco}
$\cstarqg$ is cocomplete. 
\end{proposition}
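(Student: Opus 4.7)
The plan is to mirror the strategy of \Cref{prop.cqgcoco} and appeal to the standard reduction: a category has all (small) colimits as soon as it has coproducts and coequalizers of parallel pairs (\cite[V.2.1]{MR1712872}). For coproducts, I would simply cite Wang's construction \cite[Theorem 1.1]{MR1316765}, which is stated precisely in the $\cstarqg$ setting; the argument carries over from pairs to arbitrary families without essential change.

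The real content is the construction of coequalizers of parallel arrows $f,g:A\rightrightarrows B$ in $\cstarqg$. I would let $I\subseteq B$ be the \emph{closed} two-sided $*$-ideal generated by $\{f(a)-g(a):a\in A\}$, set $C=B/I$, and denote by $\pi:B\to C$ the quotient map. The goal is to transport the compact quantum group structure from $B$ to $C$ in such a way that $\pi$ becomes a $\cstarqg$-morphism, and then check the universal property.

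The crux is showing that $(\pi\otimes\pi)\circ\Delta_B:B\to C\otimes C$ factors through $\pi$, i.e.\ that $\Delta_B(I)\subseteq\ker(\pi\otimes\pi)$. Since $\Delta_B$ is a $*$-homomorphism and $\ker(\pi\otimes\pi)$ is a closed two-sided $*$-ideal of $B\otimes B$, it suffices to verify this on the generators $f(a)-g(a)$ of $I$. By continuity and density of $\alg(A)\subseteq A$ (and the fact that $\alg(f),\alg(g)$ land in $\alg(B)$), it is further enough to handle $a\in\alg(A)$, where the Sweedler computation from \Cref{prop.cqgcoco} applies verbatim to give
\[ \Delta_B(f(a)-g(a)) \;\in\; \alg(B)\otimes I_0 + I_0\otimes\alg(B) \;\subseteq\; B\otimes I + I\otimes B, \]
where $I_0$ is the algebraic ideal of $\alg(B)$ generated by $\{f(a)-g(a):a\in\alg(A)\}$. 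Since $B\otimes I+I\otimes B$ is visibly annihilated by $\pi\otimes\pi$, this gives the required inclusion. The induced $\Delta_C:C\to C\otimes C$ is then automatically a unital $*$-homomorphism (as a factorization of one through a quotient map), and coassociativity descends from $\Delta_B$ without issue.

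It remains to verify the density condition of \Cref{def.cstarqg} for $(C,\Delta_C)$ and the universal property. For the former, surjectivity of $\pi$ implies surjectivity of $\pi\otimes\pi:B\otimes B\to C\otimes C$ (a $*$-homomorphism with dense image is surjective), so from $\Delta_B(B)(1\otimes B)=(B\otimes B)$ densely one obtains the same statement for $C$ by applying $\pi\otimes\pi$; symmetrically for $\Delta_B(B)(B\otimes 1)$. The universal property is immediate: any $\cstarqg$-arrow $B\to D$ equalizing $f$ and $g$ kills every $f(a)-g(a)$, hence kills the closed $*$-ideal $I$, and therefore factors uniquely through $\pi$. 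The step most likely to need care is the descent of $\Delta$, because one must reconcile the algebraic Sweedler manipulation (valid on $\alg$) with the $C^*$-algebraic minimal tensor product; the reduction to generators together with the density of $\alg(B)\subseteq B$ is what makes this work cleanly and bypasses the exactness pitfalls that will appear later in the section.
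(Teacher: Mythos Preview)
Your proposal is correct and follows essentially the same route as the paper: coproducts via Wang, coequalizers as the quotient by the closed ideal generated by $f(a)-g(a)$, with descent of $\Delta$, the density conditions, and the universal property verified just as you outline. The one minor difference is that the paper bypasses your detour through $\alg(A)$ by writing the key computation as the operator identity $\Delta_B\circ(f-g)=\bigl(f\otimes(f-g)+(f-g)\otimes g\bigr)\circ\Delta_A$, which holds directly for any pair of comultiplication-preserving maps and makes the appeal to density of $\alg(A)\subseteq A$ unnecessary.
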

\begin{proof}
This parallels the proof of \Cref{prop.cqgcoco} by constructing coequalizers and coproducts, so we will be brief. 

As noted in the proof just mentioned, coproducts are constructed in \cite[Theorem 1.1]{MR1316765}. 

As for coequalizers, they are constructed as before. The coequalizer of two morphisms $f,g:A\to B$ in $\cstarqg$ is the quotient of $B$ by the closed ideal $I$ generated by $f(a)-g(a)$, and the argument from the proof of \Cref{prop.cqgcoco} can be paraphrased to show this. 

Although Sweedler notation is not available anymore (because we are working with $C^*$ tensor products rather than algebraic ones), the computation carried out there can be written down in a Sweedler-notation-free manner as saying that $\Delta_B\circ(f-g)$ equals $(f\otimes(f-g)+(f-g)\otimes g)\circ\Delta_A$. It follows that $B/I$ inherits a coassociative comultiplication and $B\to B/I$ respects it, while the (Antipode) condition of \Cref{def.cstarqg} follows immediately from that of $B$. In conclusion, the quotient $B\to B/I$ is naturally a map in $\cstarqg$. 
\end{proof}

\begin{remark}
Filtered colimits and pushouts in $\cstarqg$ are constructed in \cite[3.1,3.4]{MR1316765} in the case when the morphisms in the diagram are one-to-one. According to (the proof of) \Cref{prop.cstarqgcoco}, injectivity is not necessary in order to conclude that the colimit of a diagram in $\cstarqg$ in the category of unital $C^*$-algebras is automatically endowed with a compact quantum group structure. 
\end{remark}

Next in line is the generation condition of \Cref{def.saft}. It turns out that $A_u(Q)$ will once more come in handy. We need them as objects of $\cstarqg$, so recall the enveloping $C^*$-algebra functor $\univ:\cqg\to\cstarqg$.

\begin{proposition}\label{prop.cstarqggen}
The set $\univ(A_u(Q))$ for $Q$ ranging over all positive matrices generates $\cstarqg$. 
\end{proposition}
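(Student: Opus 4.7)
The plan is to reduce to the corresponding generation statement on the algebraic side via the adjunction $\univ\dashv\alg$ recalled in \Cref{subse.cqg}, and then invoke \Cref{prop.reggen}. Concretely, given distinct parallel arrows $f\ne g:B\to C$ in $\cstarqg$, I need to produce a positive matrix $Q$ and a morphism $\phi:\univ(A_u(Q))\to B$ such that $f\circ\phi\ne g\circ\phi$.

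First I would verify that $\alg(f)\ne\alg(g)$ as morphisms $\alg(B)\to\alg(C)$ in $\cqg$. This is essentially automatic: $\alg(B)$ sits as a dense $*$-subalgebra of $B$ (as recalled in \Cref{subse.cqg}, via \cite[A.1]{MR1862084}), and two continuous $*$-homomorphisms on a $C^*$-algebra that agree on a dense subalgebra agree everywhere, so $f\ne g$ forces $\alg(f)$ and $\alg(g)$ to differ already on $\alg(B)$.

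Next, I would apply \Cref{prop.reggen}, which asserts that the $A_u(Q)$'s form a regular, hence in particular a generating, class in $\cqg$. Feeding in the distinct arrows $\alg(f)$ and $\alg(g)$ produces some positive $Q$ and a CQG algebra morphism $\phi:A_u(Q)\to\alg(B)$ with $\alg(f)\circ\phi\ne\alg(g)\circ\phi$.

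Finally, I would transfer back to $\cstarqg$ using naturality of the adjunction bijection $\cstarqg(\univ(A_u(Q)),-)\cong\cqg(A_u(Q),\alg(-))$: writing $\tilde\phi:\univ(A_u(Q))\to B$ for the mate of $\phi$, naturality sends the pair $\bigl(\alg(f)\circ\phi,\alg(g)\circ\phi\bigr)$ to $\bigl(f\circ\tilde\phi,g\circ\tilde\phi\bigr)$, so distinctness of the former implies distinctness of the latter. No serious obstacle is anticipated; the argument is a formal passage across the adjunction, and the only ingredient beyond pure category theory and the previous section's generation result is the standard density of $\alg(B)$ in $B$.
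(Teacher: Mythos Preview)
Your argument is correct and is essentially the paper's proof unpacked: the paper invokes the general principle that a left adjoint sends a generating set to a generating set whenever its right adjoint is faithful, checks faithfulness of $\alg$ via the density of $\alg(B)\subset B$, and cites \Cref{prop.reggen} for generation on the $\cqg$ side. Your three steps---density gives $\alg(f)\ne\alg(g)$, generation in $\cqg$ yields a separating $\phi$, and the adjunction bijection transports the separation back---are exactly the concrete content of that abstract statement.
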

\begin{proof}
That the $A_u(Q)$ form a generator in $\cqg$ is part of the statement of \Cref{th.cqgfinpres}. It is a simple exercise that left adjoints, such as $\univ$, turn generators into generators provided their right adjoints are faithful. In our case, the faithfulness of the right adjoint $\alg$ to $\univ$ follows from the density of the inclusion $\alg(A)\subset A$: Any arrow $f:A\to B$ in $\cstarqg$ is the extension by continuity of $\alg(f):\alg(A)\to\alg(B)$, and hence $\alg(f)=\alg(g)$ implies $f=g$. 
\end{proof}

The only ingredient of \Cref{def.saft} still to be addressed is co-wellpoweredness. Recall (\Cref{subse.saft}) that this meant that every object has only a set of quotient objects. It will help, then, to know exactly which morphisms in $\cstarqg$ are epimorphisms; this is what the following result does. 

\begin{proposition}\label{prop.epi}
A morphism $f:A\to B$ in $\cstarqg$ is an epimorphism if and only if it is surjective.
\end{proposition}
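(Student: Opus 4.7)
The forward implication is immediate: if $f$ is surjective and $g,h:B\to D$ are $\cstarqg$-morphisms with $g\circ f=h\circ f$, they agree on $f(A)=B$ and hence coincide.

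For the converse, suppose $f:A\to B$ is an epi and set $C:=\overline{f(A)}\subseteq B$. The first step would be to check that $C$ is a compact quantum subgroup of $B$: continuity of $\Delta_B$ together with $\Delta_B(f(a))=(f\otimes f)(\Delta_A(a))\in f(A)\otimes f(A)$ gives $\Delta_B(C)\subseteq C\otimes C$, and the (Antipode) density condition of \Cref{def.cstarqg} transfers from $A$ via the identity $\Delta_B(f(a))(1\otimes f(b))=(f\otimes f)(\Delta_A(a)(1\otimes b))$ (and its symmetric analogue), which exhibits a dense subset of $C\otimes C$ inside $\Delta_C(C)(1\otimes C)$. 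Factoring $f$ as $A\to C\mono B$, the inclusion $\iota:C\mono B$ inherits the epi property from $f=\iota\circ(A\to C)$, so the task reduces to showing that such an $\iota$ is necessarily an isomorphism.

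Next, I would pass to the CQG-algebra level by applying $\alg$. Using the adjunction $\univ\dashv\alg$ together with the density of $\alg(\cdot)$, one checks that $\iota$ epi in $\cstarqg$ implies $\alg(\iota):\alg(C)\mono\alg(B)$ epi in $\cqg$: any parallel pair $g_0,h_0:\alg(B)\to D_0$ in $\cqg$ agreeing after $\alg(\iota)$ extends via the adjunction to $\bar g,\bar h:B\to\univ(D_0)$ in $\cstarqg$ agreeing on the dense subalgebra $\alg(C)\subseteq C$; by continuity they agree on $C$, and by the $\cstarqg$-epi hypothesis on $\iota$ they coincide, so $g_0=h_0$. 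Moreover, $C\subsetneq B$ forces $\alg(C)\subsetneq\alg(B)$, since the opposite inclusion would propagate to $C=\overline{\alg(C)}=\overline{\alg(B)}=B$ by density.

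The remaining — and main — obstacle is the purely Hopf-algebraic fact that epis in $\cqg$ are surjective; equivalently, that any proper CQG-subalgebra inclusion $K\mono H$ admits two distinct $\cqg$-morphisms out of $H$ that agree on $K$. This is the CQG analogue of Poguntke's theorem on compact groups alluded to in the introduction, and I would expect it to be established by exhibiting a simple $H$-comodule that is not already a $K$-comodule and using its coefficient matrix to twist one of the two canonical coprojections $H\to H\amalg_K H$ of the pushout in $\cqg$, producing a second morphism that coincides with the first on $K$ but detects the new matrix coefficients. Once this algebraic ``epi equals surjective'' statement is in hand, it forces $\alg(C)=\alg(B)$, contradicting $C\subsetneq B$ and completing the proof.
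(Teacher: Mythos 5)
Your reduction to a subgroup inclusion $\iota:C\mono B$ and your overall strategy (pass to the algebraic level, then prove that epimorphisms in $\cqg$ are surjective) are reasonable, but the bridge between the two levels is broken. The adjunction $\univ\dashv\alg$ converts a $\cqg$-morphism $g_0:\alg(B)\to D_0$ into a $\cstarqg$-morphism $\univ(\alg(B))\to\univ(D_0)$, \emph{not} into a morphism $B\to\univ(D_0)$: the counit $\univ(\alg(B))\to B$ points the wrong way and is in general a proper quotient. Concretely, a $*$-homomorphism defined on the dense subalgebra $\alg(B)$ with values in the universal envelope $\univ(D_0)$ need not be bounded for the norm of $B$. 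Take $B$ the reduced $C^*$-completion of a non-coamenable CQG algebra (say the reduced group $C^*$-algebra of a free group), $D_0=\alg(B)$ and $g_0=\id$; an extension $\bar g:B\to\univ(D_0)$ composed with the counit of $\univ(D_0)$ would yield a bounded counit on $B$, contradicting non-coamenability. So the step ``$\iota$ epi in $\cstarqg$ implies $\alg(\iota)$ epi in $\cqg$'' is unproven, and this universal-versus-reduced tension is precisely the delicate point. The paper's proof circumvents it by working with the pushout characterization of epimorphisms: it computes the algebraic amalgamated coproduct $B^a\coprod_{A^a}B^a$ explicitly --- the inclusion $A^a\subseteq B^a$ of cosemisimple Hopf algebras splits as an $A^a$-bimodule map, $B^a=A^a\oplus M$, so the pushout decomposes as a direct sum of $2^n$ copies of $M^{\otimes n}$ and the coprojection $\iota'$ is a proper inclusion --- and then transfers this non-surjectivity to the $C^*$-level through the \emph{reduced} completion functor $\red$, which is functorial on injections and reflects isomorphisms, via a diagram in which every other arrow is surjective.

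A second, smaller problem: the step you call the ``main obstacle'' (epimorphisms in $\cqg$ are surjective) is only sketched, and the sketch --- twisting one coprojection of $H\amalg_KH$ by the coefficient matrix of a new comodule --- is not carried out; it is not clear that such a twist is a Hopf $*$-algebra morphism agreeing with the untwisted coprojection on $K$. The bimodule-splitting computation above is what actually does this work: when $M\ne 0$ it exhibits the two coprojections $B^a\tto B^a\coprod_{A^a}B^a$ as landing in visibly different summands, hence distinct. (Also, the compact-group antecedent cited in the paper is Reid's theorem, not Poguntke's.)
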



\begin{proof}
As usual in categories where objects are sets with some kind of structure and morphisms are maps preserving that structure, the implication surjective $\Rightarrow$ epimorphism is immediate. 

To prove the other implication, we will show that if $f$ is an epimorphism, then $\alg(f)$ is surjective (the conclusion follows from the denseness of $\alg(B)\subset B$). Since we can always substitute the image of $f$ for $A$, we can (and will) assume that $f$ is injective. 

First, recall the construction $\cqg\ni X\mapsto\red(X)\in\cstarqg$ mentioned in \Cref{subse.cqg}. It is functorial when restricted to the category $\cqg_{\mathrm{inj}}$ of CQG algebras and injective morphisms, (this is the essence of \cite[6.2.4]{MR2805748}). In order to keep the notation manageable, indicate the functors $\alg$ and $\red$ by superscripts, as in $X^a$ for $\alg(X)$, $X^r$ for $\red(X)$, $X^{ar}$ for $\red(\alg(X))$, etc.; the same conventions are in place for morphisms.   

Let $\displaystyle \iota:B\to B\coprod_AB$ be the left hand canonical inclusion into the pushout of $f$ along itself in the category $\cstarqg$, or equivalently (by the proof of \Cref{prop.cstarqgcoco}), in the category $\cstar$ of unital $C^*$-algebras. One condition equivalent to $f$ being epimorphic is that $\iota$ be an isomorphism. Similarly, we denote by $\iota'$ the left hand inclusion $\displaystyle B^a\to B^a\coprod_{A^a}B^a$ into the pushout in $\astar$. Note that $\iota$ and $\iota'$ are both injective, as they have left inverses by the universality property of pushouts.

Assume now that $f^a$ is not surjective. Then, I claim that (a) $\iota'$ cannot be surjective (equivalently, an isomorphism), and hence (b) neither can $(\iota')^r$. That (a) does indeed imply (b) follows from the fact \cite[6.2.12]{MR2805748} that the functor $\red:\cqg_{\mathrm{inj}}$ reflects isomorphisms. 

To prove (a), recall that an inclusion $K\subseteq H$ of cosemisimple Hopf algebras always splits as a $K$-$K$-bimodule map (e.g. as argued in the proof of \cite[2.0.4]{2011arXiv1110.6701C}). Applied to the inclusion $f^a:A^a\to B^a$, this observation yields a direct sum decomposition $B^a=A^a\oplus M$ as $A^a$-$A^a$-bimodules for some non-zero $M$, and the pushout $\displaystyle B^a\coprod_{A^a}B^a$ breaks up as a direct sum of $2^n$ copies of $M^{\otimes n}$ for $n\ge 0$ (tensor product of $A^a$-$A^a$-bimodules). Moreover, $\iota'$ identifies $B^a$ with the summand $A^a\oplus M$ therein. 

Now consider the commutative diagram  
\[
 \begin{tikzpicture}[anchor=base]
  \path (0,0) node (1) {$B$} +(2,0) node (2) {$B\coprod_AB$} +(5,0) node (3) {$B^{ar}\coprod_{A^{ar}}B^{ar}$} +(0,-2) node (4) {$B^{ar}$} +(5,-2) node (5) {$(B^a\coprod_{A^a}B^a)^r$}; 
  \draw[->] (1) -- (2) node[pos=.5,auto] {$\scriptstyle \iota$};
  \draw[->] (2) -- (3);
  \draw[->] (1) -- (4);
  \draw[->] (3) -- (5);
  \draw[->] (4) -- (5) node[pos=.5,auto] {$\scriptstyle (\iota')^r$};
 \end{tikzpicture}
\]
where the right hand vertical map comes from the universality property of the pushout applied to the two inclusions $\displaystyle B^a\to B^a\coprod_{A^a}B^a$, and the other two unnamed maps are induced by the surjection $B\to B^{ar}$. We have just argued that if $f^a$ is not surjective, then the lower left corner path is not surjective. But then the upper right corner path isn't either. However, the right hand upper horizontal arrow is surjective, as is the right hand vertical arrow. In conclusion, the only morphism in this path which can fail to be surjective (under the assumption that $f^a$ is not surjective) is $\iota$. 
\end{proof}

\begin{remark}\label{rem.epi}
The proof makes it clear that the analogous result is true for $\cqg$, i.e. epimorphisms are surjective. 
\end{remark}

Since for any compact quantum group $A$ there is only a set of quotients of $\alg(A)$ and hence only a set of compact quantum groups having such quotients as dense CQG subalgebras, the previous result implies:

\begin{proposition}\label{prop.cstarqgcowell}
$\cstarqg$ is co-wellpowered.\qedhere 
\end{proposition}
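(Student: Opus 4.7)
The plan is to use Proposition~\ref{prop.epi} to replace a quotient object of $A \in \cstarqg$ by a surjection $f : A \twoheadrightarrow B$ (up to iso under $A$), and then to bound the class of such $B$ by a two-step parameterization: first a quotient Hopf $*$-algebra $H$ of $\alg(A)$, then a $C^*$-completion of $H$. Both of these classes will be sets, which gives the conclusion.

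The key step is to check that for any surjection $f : A \twoheadrightarrow B$ in $\cstarqg$ the restriction $\alg(f) : \alg(A) \to \alg(B)$ is again surjective. Its image is a sub-Hopf $*$-algebra of $\alg(B)$, hence in particular a $*$-subalgebra of $B$; since $\alg(A)$ is dense in $A$ and $f$ is a continuous surjection, $\alg(f)(\alg(A))$ is dense in $B$. But as recalled in \Cref{subse.cqg}, $\alg(B)$ is the \emph{unique} dense sub-Hopf $*$-algebra of $B$ (\cite[A.1]{MR1862084}), so $\alg(f)(\alg(A)) = \alg(B)$.

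Given this, $\alg(B)$ ranges over quotient Hopf $*$-algebras of the fixed algebra $\alg(A)$, of which there are only a set (bounded by $2^{|\alg(A)|}$). For each such quotient $H$, a compact quantum group $B$ with $\alg(B) \cong H$ is an intermediate $C^*$-completion of $H$ in the sense of \cite[6.2]{MR2805748}, hence a quotient of $\univ(H)$ by a closed two-sided ideal; these likewise form a set. Pairing the two data gives a set-indexed parameterization of all possible quotient objects of $A$. The only substantive obstacle is the surjectivity of $\alg(f)$, which, however, drops out immediately from the uniqueness of the dense sub-Hopf $*$-algebra of $B$; everything else is essentially bookkeeping.
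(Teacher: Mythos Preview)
Your argument is correct and matches the paper's own one-line justification (the sentence immediately preceding the proposition), which bounds quotient objects by first passing to quotients of $\alg(A)$ and then to compact quantum groups having such quotients as dense CQG subalgebras. You have simply made explicit the step the paper leaves implicit, namely that a surjection $f$ in $\cstarqg$ restricts to a surjection $\alg(f)$, and your justification via the uniqueness of the dense sub-Hopf $*$-algebra is exactly right.
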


\begin{proof of cstarqgsaft}
\Cref{prop.cstarqgcoco,prop.cstarqggen,prop.cstarqgcowell} are precisely what is required by \Cref{def.saft}. 
\end{proof of cstarqgsaft}


\section{Applications}\label{se.appl}

The goal of this section is to apply \Cref{th.cqgfinpres,th.cstarqgsaft}, together with the adjoint functor theorem and abstract properties of presentable or SAFT categories, to the construction of compact quantum groups with various universal properties. These constructions fall roughly into two categories: right adjoints to functors defined on $\cqg$ or $\cstarqg$, as direct applications of \Cref{th.adj}, and left adjoints arising in a slightly more roundabout way in \Cref{subse.kac}. 

Note that limits in the categories $\cqg$ and $\cstarqg$, discussed in the next subsection, fit in this framework as right adjoints: Given a small category $J$ and a category $\mathcal C$, the limits of functors $J\to\mathcal C$, if they exist, can be obtained as images of a right adjoint to the diagonal functor $\Delta:\mathcal C\to\mathcal C^J$ (the latter is notation for the category of all functors $J\to\mathcal C$) sending an object $c\in\mathcal C$ to the functor $\Delta(c):J\to\mathcal C$ constant at $c$.


\subsection{Limits in \texorpdfstring{$\cqg$}{CQG} and \texorpdfstring{$\cstarqg$}{C*QG}}\label{subse.lim}

We now know from \Cref{th.cqgfinpres,th.cstarqgsaft} that both $\cqg$ and $\cstarqg$ are SAFT. Remembering that SAFT-ness implies completeness (\Cref{rem.saftcomplete}), we get:

\begin{theorem}\label{th.lim}
The categories $\cqg$ and $\cstarqg$ are complete.\qedhere 
\end{theorem}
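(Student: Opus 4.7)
The statement is, by design, immediate from the two main theorems just proved together with a general fact from category theory. The plan is to invoke \Cref{rem.saftcomplete}, which records that every SAFT category is automatically complete (this is a theorem of Isbell--Kelly, cited as the implication (ii) $\Rightarrow$ (v) in Theorem 5.6 of MR850527). Since \Cref{th.cqgfinpres} shows $\cqg$ is finitely presentable and hence SAFT (co-wellpoweredness coming from the nontrivial result \cite[1.58]{MR1294136}), and \Cref{th.cstarqgsaft} shows $\cstarqg$ is SAFT directly, completeness in both cases is a one-line deduction, and that is essentially the entire proof.

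If one wants a more hands-on perspective, limits can alternatively be packaged as the image of a right adjoint to the diagonal functor $\Delta:\mathcal{C}\to\mathcal{C}^J$; since colimits in a functor category are computed objectwise, $\Delta$ is cocontinuous, and \Cref{th.adj} then yields the desired right adjoint, whose value on a $J$-shaped diagram is precisely the limit. Unpacking the assertion via the dual of V.2.1 in Mac Lane, it suffices to produce small products and equalizers. Equalizers can be exhibited concretely: the equalizer of $f,g:A\to B$ in $\cqg$ is the largest sub-Hopf-$*$-algebra of $A$ on which $f$ and $g$ coincide (the underlying $*$-subalgebra is obvious, and coassociativity together with the fact that $f,g$ are coalgebra maps makes it closed under $\Delta_A$), and the same description works in $\cstarqg$ after closing up.

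The genuine content is therefore the existence of products, which on the compact-quantum-group side of the duality correspond to the ``coproducts of compact quantum groups'' whose classical analogue (discussed in the introduction) requires a Bohr-compactification step and cannot be constructed set-theoretically. This is precisely the difficulty that the finitely-presentable / SAFT machinery of the previous two sections was designed to absorb; nothing further is required at this stage beyond assembling the citations.
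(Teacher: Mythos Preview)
Your first paragraph is exactly the paper's proof: both $\cqg$ and $\cstarqg$ are SAFT by \Cref{th.cqgfinpres,th.cstarqgsaft}, and SAFT categories are complete by \Cref{rem.saftcomplete}. Nothing more is needed, and the paper says nothing more.

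One caution about the optional ``hands-on'' paragraph: your parenthetical justification for the equalizer is not right. From $f(a)=g(a)$ one gets $(f\otimes f)\Delta(a)=(g\otimes g)\Delta(a)$, but this does \emph{not} force $\Delta(a)\in E\otimes E$ for $E=\{a:f(a)=g(a)\}$; the set-theoretic equalizer of two coalgebra maps is generally only a subalgebra, not a subcoalgebra. What is true (and what you correctly \emph{state}, just not what you argue) is that the largest sub-Hopf-$*$-algebra contained in $E$ serves as the equalizer in $\cqg$: any $\cqg$-map $h:C\to A$ with $fh=gh$ has image a sub-Hopf-$*$-algebra landing in $E$, hence in that largest one. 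Since this material is supplementary and the actual proof stands on the first paragraph alone, the proposal is fine; just drop or repair that parenthetical if you keep the extra discussion.
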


Limits in these categories are quantum analogues of \define{co}limits of compact groups. It is natural to ask whether functors $J\to\cqg$ or $\cstarqg$ taking commutative values have commutative limits, or in other words, whether \Cref{th.lim} recovers ordinary colimits of compact groups. Since $\cqgab$ is complete (by \Cref{rem.cqgab} or simply constructing coequalizers and coproducts in the category of compact groups), the next result confirms this:

\begin{proposition}\label{prop.limcomm}
The inclusions $\cqgab\to\cqg$ and $\cstarqgab\to\cstarqg$ are right adjoints. 
\end{proposition}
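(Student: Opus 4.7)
The plan is to construct left adjoints to the two inclusions explicitly, as abelianization functors. For $A \in \cqg$ let $I \subseteq A$ be the two-sided $*$-ideal generated by the commutators $[a,b] = ab - ba$; for $A \in \cstarqg$ take instead $\overline{I}$, the closed version of the same ideal. I expect $A/I$ (respectively $A/\overline{I}$) to live in $\cqgab$ (respectively $\cstarqgab$) and to enjoy the evident universal property: any morphism into a commutative object kills commutators and hence factors uniquely through the quotient, exhibiting abelianization as left adjoint to the inclusion.

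For $\cqg$ the substantive step is to certify that $I$ is a Hopf $*$-ideal. Stability under $*$ is immediate from $(ab - ba)^* = -[a^*, b^*]$, and $S$ carries commutators to commutators because it is an algebra anti-homomorphism. The coideal property reduces to the Sweedler computation
\begin{equation*}
\Delta(ab - ba) = a_1 b_1 \otimes a_2 b_2 - b_1 a_1 \otimes b_2 a_2 = [a_1, b_1] \otimes a_2 b_2 + b_1 a_1 \otimes [a_2, b_2],
\end{equation*}
which visibly sits inside $I \otimes A + A \otimes I$. Hence $A/I$ is a commutative Hopf $*$-algebra quotient of $A$, which by \Cref{rem.quotientCQG} is automatically a CQG algebra, and the universal property is then a one-line verification.

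The main obstacle is the $\cstarqg$ case, where Sweedler notation is unavailable and the minimal tensor product does not decompose algebraically. I would circumvent this by working inside the dense sub-Hopf-$*$-algebra $\alg(A) \subseteq A$: the algebraic commutator ideal $I_0 \subseteq \alg(A)$ is a Hopf $*$-ideal by the previous paragraph, and its closure in $A$ is the closed commutator ideal $\overline{I}$. Continuity of $\Delta$ and of the quotient map, combined with density of $\alg(A)$, then propagate the coideal and antipode-stability properties from $I_0$ to $\overline{I}$. The antipode axiom of \Cref{def.cstarqg} for $A/\overline{I}$ survives by applying the surjection $A \otimes A \to (A/\overline{I}) \otimes (A/\overline{I})$ to the dense subspaces $\Delta(A)(1 \otimes A)$ and $\Delta(A)(A \otimes 1)$. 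As an aside more in the spirit of the rest of the paper, one could alternatively invoke the adjoint functor theorem directly: both inclusions preserve limits (a limit of commutative objects is commutative), and by \Cref{th.cqgfinpres,th.cstarqgsaft} the ambient categories are co-wellpowered, which yields the required solution set condition at each $A$.
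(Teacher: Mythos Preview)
Your main argument is correct and is the same strategy as the paper's: exhibit abelianization as the left adjoint. The paper phrases the verification that $A_{ab}$ inherits a comultiplication via the universal property of $A\to A_{ab}$ and diagram chases, while you compute directly that the commutator ideal is a Hopf $*$-ideal; these are cosmetic variants of one another. One wording quibble in the $\cstarqg$ paragraph: there is no antipode on $A$ in general, so ``antipode-stability of $\overline I$'' is not a meaningful condition there. What you actually need (and what you then do check) is that $(\pi\otimes\pi)\circ\Delta$ kills $\overline I$ and that the density condition of \Cref{def.cstarqg} passes to the quotient; your density-and-continuity argument handles both.

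The closing aside about invoking the adjoint functor theorem, however, is circular as stated. To apply the (ordinary, not special) adjoint functor theorem to the inclusion $\cqgab\to\cqg$ you need, besides a solution set, that $\cqgab$ is complete \emph{and} that the inclusion preserves limits. Your justification ``a limit of commutative objects is commutative'' presumably refers to limits computed in $\cqg$; but limits in $\cqg$ are produced abstractly via \Cref{th.lim}, not at the level of underlying algebras, so there is no a priori reason the $\cqg$-limit of a diagram of commutative objects should be commutative. In fact, that statement is exactly the consequence of the proposition highlighted in the text preceding it, so you cannot use it as input. Co-wellpoweredness of the ambient category does give you the solution set, but the continuity hypothesis is missing. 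Drop the aside, or replace it by an appeal to \Cref{rem.cqgab} for the completeness of $\cqgab$ together with an independent argument that the inclusion creates limits.
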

\begin{proof}
In both cases the left adjoint is \define{abelianization}, associating to an object $A\in\cqg$ (or $\cstarqg$) its largest commutative quotient $*$-algebra (resp. $C^*$-algebra) $A_{ab}$. This is all rather routine, so we omit most of the details.

The universality property of the canonical quotient map $\pi:A\to A_{ab}$ ensures that the composition 
\[
 \begin{tikzpicture}[anchor=base]
   \path (0,0) node (1) {$A$} +(2,0) node (2) {$A\otimes A$} +(5,0) node (3) {$A_{ab}\otimes A_{ab}$}; 
   \draw[->] (1) -- (2) node[pos=.5,auto] {$\scriptstyle \Delta$};
   \draw[->] (2) -- (3) node[pos=.5,auto] {$\scriptstyle \pi\otimes\pi$};
 \end{tikzpicture}
\]
factors through a map $\Delta_{ab}:A_{ab}\to A_{ab}\otimes A_{ab}$. The coassociativity of $\Delta_{ab}$ follows from the uniqueness of the factorization of $\pi^{\otimes 3}\circ(\Delta\otimes\id)\circ\Delta:A\to A_{ab}^{\otimes 3}$ through $\pi$. By construction, we get a commutative square 
\[
 \begin{tikzpicture}[anchor=base]
   \path (0,0) node (1) {$A$} +(3,0) node (2) {$A_{ab}$} +(0,-2) node (3) {$A\otimes A$} +(3,-2) node (4) {$A_{ab}\otimes A_{ab}$}; 
   \draw[->] (1) -- (2) node[pos=.5,auto] {$\scriptstyle \pi$};
   \draw[->] (3) -- (4) node[pos=.5,auto] {$\scriptstyle \pi\otimes\pi$};
   \draw[->] (1) -- (3) node[pos=.5,auto,swap] {$\scriptstyle \Delta$};
   \draw[->] (2) -- (4) node[pos=.5,auto] {$\scriptstyle \Delta_{ab}$};
 \end{tikzpicture}
\]  
The rest of the structure and properties (e.g. counit $\varepsilon_{ab}:A_{ab}\to\mathbb C$ and counitality of $\Delta_{ab}$ in the algebraic case) follow similarly, as do the functoriality and the desired universality property of $A\mapsto A_{ab}$. To see the latter, for example, let $f:A\to B$ be a morphism in $\cqg$ or $\cstarqg$ with $B$ commutative. Then, $f$ factors uniquely as $f_{ab}\circ\pi$ for an algebra map $f_{ab}:A_{ab}\to B$. The outer rectangle of  
\[
	\begin{tikzpicture}[anchor=base]
   	\path (0,0) node (1) {$A$} +(3,0) node (2) {$A_{ab}$} +(6,0) node (3) {$B$} +(0,-2) node (4) {$A\otimes A$} +(3,-2) node (5) {$A_{ab}\otimes A_{ab}$} +(6,-2) node (6) {$B\otimes B$}; 
   	\draw[->] (1) -- (2) node[pos=.5,auto] {$\scriptstyle \pi$};
   	\draw[->] (2) -- (3) node[pos=.5,auto] {$\scriptstyle f_{ab}$};
   	\draw[->] (4) -- (5) node[pos=.5,auto] {$\scriptstyle \pi\otimes\pi$};
   	\draw[->] (5) -- (6) node[pos=.5,auto] {$\scriptstyle f_{ab}\otimes f_{ab}$};
   	\draw[->] (1) -- (4);
   	\draw[->] (2) -- (5);
   	\draw[->] (3) -- (6);
  \end{tikzpicture}
\]
is commutative because $f=f_{ab}\circ\pi$ is compatible with comultiplications, and we have just observed that the left hand square commutes. It follows that the precomposition of the right hand square with $\pi$ commutes also, and since $\pi$ is onto, the right hand square must be commutative. We again skip the entirely similar arguments for compatibility of $f_{ab}$ with counits and antipodes in the algebraic case. 
\end{proof}

Outside of the general categorical framework provided by \Cref{th.cqgfinpres,th.cstarqgsaft}, one can also arrive at limits in the categories $\cqg$ and $\cstarqg$ by means of the Tannakian formalism introduced in \cite{MR943923} ($\cqg$ is better suited for this, so we focus on it). There, Woronowicz associates a compact quantum group (or rather a compact quantum group of the form $\univ(A)$ for $A\in\cqg$, so effectively, he recovers a CQG algebra) from a so-called \define{concrete monoidal $W^*$-category with complex conjugation}. These are basically just rigid, monoidal $W^*$-categories endowed with a faithful, monoidal $*$-functor \cite{MR808930} to the category $\cat{Hilb}$ of finite-dimensional Hilbert spaces. 

This is a version of Tannaka duality for Hopf algebras (e.g. as in \cite{MR1623637,2012arXiv1202.3613V} and the many references therein): The CQG algebra constructed in \cite[1.3]{MR943923} given a concrete monoidal $W^*$-category $\mathcal C$ is what in \cite{MR1623637} would be called the coendomorphism Hopf algebra of the functor $\mathcal C\to\cat{Hilb}$ that is implicitly part of Woronowicz's definition. 

Now, if one starts with the category of unitary comodules of a CQG algebra $A$ and performs the above construction, the resulting CQG algebra is again $A$. In other words, unitary comodules know all there is to know about a CQG algebra (hence the name `Tannaka \define{re}construction'). It follows from this that the construction of new CQG algebras out of old (such as, say, the limit of some functor $F:J\to\cqg$ out of the values of $F$) has a chance of being carried out categorically: Identify the category of unitary comodules, and you know the CQG algebra. 

To get some insight into what limits in $\cqg$ look like, we do what the previous paragraph suggests, for products (but also state the result for pullbacks, as it will be useful in \Cref{se.mono}): Given a family $A_i$, $i\in I$ of objects in $\cqg$, what does the category of finite-dimensional, unitary comodules of the product $\displaystyle A=\prod_i A_i$ in $\cqg$ look like in terms of the categories of comodules of the individual $A_i$? The answer turns out to be quite simple; arguably simpler, in fact, than the description of the category of unitary comodules for the (more familiar, in the literature) coproduct $\displaystyle\coprod A_i$ from \cite{MR1316765}. All comodules below are understood to be finite-dimensional and unitary. 

Putting an $A$-comodule structure on an $n$-dimensional Hilbert space $V$ is the same as giving a CQG algebra morphism $f:A_u(Q)\to A$ for some positive $Q\in GL_n(\mathbb C)$: In one direction, the comodule structure induces a morphism by sending the standard generators $u_{ij}\in A_u(Q)$ (for an appropriate $Q$) to the coefficients $v_{ij}$ with respect to an orthonormal basis of $V$; in the opposite direction, make $A$ coact on the standard $A_u(Q)$-comodule by ``scalar corestriction'' via the coalgebra morphism $f$. In turn, by the defining property of the categorical product, a morphism $f:A_u(Q)\to A$ means a family of morphisms $f_i:A_u(Q)\to A_i$, $i\in I$. Going through this comodule structure - morphism correspondence in reverse for each $i$, the data consisting of the $f_i$'s is equivalent to putting an $A_i$-comodule structure on the canonical $n$-dimensional comodule of $A_u(Q)$ for every $i$. A moment's thought will show how do modify this argument to take care pullbacks, and all in all, we have the following result:

\begin{proposition}\label{prop.limTann}
Let $A_i\in\cqg$, $i\in I$ be a set of objects, and $A=\prod A_i$ the product in $\cqg$. Then, the category of $A$-comodules has as objects finite-dimensional Hilbert spaces admitting an $A_i$-comodule structure for each $A_i$, and as morphisms linear maps respecting all of these structures.

Let $f:B\to C$ and $f':B'\to C$ be morphisms in $\cqg$, and $\displaystyle A=B\times_CB'$ the pullback in $\cqg$. The category of $A$-comodules has 
\begin{enumerate}
\renewcommand{\labelenumi}{(\alph{enumi})}
\item as objects, triples $(V,V',\varphi)$ where $V$ and $V'$ are $B$ and $B'$-comodules respectively, and $\varphi:V\to V'$ is a unitary map identifying $V$ and $V'$ as $C$-comodules;
\item as morphisms from $(V,V',\varphi)$ to $(W,W',\psi)$, pairs $(\xi,\xi')$, where $\xi:V\to W$ and $\xi':V'\to W'$ are morphisms in $\mathcal M^B$ and $\mathcal M^{B'}$ respectively, making the diagram
\[
 \begin{tikzpicture}[anchor=base]
   \path (0,0) node (1) {$V$} +(2,0) node (2) {$V'$} +(0,-2) node (3) {$W$} +(2,-2) node (4) {$W'$}; 
   \draw[->] (1) -- (2) node[pos=.5,auto] {$\scriptstyle \varphi$};
   \draw[->] (3) -- (4) node[pos=.5,auto] {$\scriptstyle \psi$};
   \draw[->] (1) -- (3) node[pos=.5,auto,swap] {$\scriptstyle \xi$};
   \draw[->] (2) -- (4) node[pos=.5,auto] {$\scriptstyle \xi'$};
 \end{tikzpicture}
\]
commutative. 
\end{enumerate}\qedhere
\end{proposition}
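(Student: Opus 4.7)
The plan is to identify unitary comodules with CQG algebra morphisms out of the universal objects $A_u(Q)$ from \Cref{ex.auq}, and then translate the defining universal properties of product and pullback in $\cqg$ through this correspondence.

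For the product, I would first recall that giving an $n$-dimensional unitary comodule structure on a Hilbert space $V$ over any CQG algebra $B$ amounts to specifying a positive $Q\in GL_n(\mathbb{C})$ (determined by the action of $S^2$ on the coefficient matrix with respect to an orthonormal basis) together with a CQG algebra morphism $A_u(Q)\to B$ sending the standard generators $u_{ij}$ to the coefficients; this is precisely the universality property stated in \Cref{ex.auq}, combined with the fact that $A_u(Q)$ itself carries the standard $n$-dimensional unitary comodule. By the universal property of $\prod A_i$ in $\cqg$, a morphism $A_u(Q)\to A$ is the same as a family of morphisms $A_u(Q)\to A_i$, which by the correspondence just recalled is the same as endowing the fixed $n$-dimensional space $V$ with an $A_i$-comodule structure for every $i$ (all sharing the same $Q$, hence compatible with a single choice of inner product making all coefficient matrices unitary).

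For the morphism side in the product case, I would observe that a linear map $\xi\colon V\to W$ between $A$-comodules is a comodule morphism iff one standard square in $A$ commutes. Postcomposing with each projection $A\to A_i$ (which is a CQG morphism and in particular faithful on the relevant coefficient data through the pullback to $A_i$), and using that the product of maps into the $A_i$ separates points in $A$ by the universal property, shows that this single condition is equivalent to the corresponding squares in each $A_i$ commuting, i.e.\ to $\xi$ being a morphism of $A_i$-comodules for every $i$. The pullback case proceeds identically: a morphism $A_u(Q)\to B\times_C B'$ in $\cqg$ is, by the pullback's universal property, a pair of morphisms $A_u(Q)\to B$ and $A_u(Q)\to B'$ whose compositions into $C$ agree. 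Under the comodule correspondence, this gives a $B$-comodule structure and a $B'$-comodule structure on the same Hilbert space inducing the same $C$-comodule structure; allowing the two structures to live on nominally distinct spaces $V,V'$ linked by a unitary $\varphi$ is a harmless relabeling, since $\varphi$ transports one $C$-structure to the other by hypothesis. Morphisms $(\xi,\xi')$ are then handled exactly as in the product case, separately for $\xi$ and $\xi'$, with the commuting square involving $\varphi,\psi$ being precisely the condition that the component into $C$ coincides on the two sides.

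The only subtle point is book-keeping of the squared antipode: the $Q$ attached to a comodule must be consistent across the correspondences. This is automatic because each structural morphism in $\cqg$ intertwines the antipodes, so the same $Q$ works on all sides of the diagram; no new $Q$ is needed at any step. Once this coherence is recorded, both assertions are essentially formal unwindings of universal properties, and I do not expect any genuine obstacle beyond keeping the diagrammatic translations straight.
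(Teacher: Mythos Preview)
Your approach is the same as the paper's: identify unitary comodules with $\cqg$-morphisms out of the $A_u(Q)$, then read off the universal property of the product (or pullback). The paper in fact only spells out the object side for products and leaves the rest to ``a moment's thought'', so your write-up is more thorough.

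There is one point that needs more care. In handling morphisms you assert that the projections $\pi_i:A\to A_i$ jointly separate points ``by the universal property''. The universal property only tells you the $\pi_i$ are jointly \emph{monic in $\cqg$}; joint injectivity on underlying vector spaces is a stronger statement. It is true, but you need a line of argument: the intersection $K=\bigcap_i\ker\pi_i$ is a Hopf $*$-ideal (each $\ker\pi_i$ is, and intersections of such are), so $A/K\in\cqg$ by \Cref{rem.quotientCQG}; the induced maps $A/K\to A_i$ then assemble, via the universal property of $A$, into a section of $A\twoheadrightarrow A/K$, forcing $K=0$. Alternatively you could invoke \Cref{prop.mono}, but that is proved later in the paper and its proof uses the pullback half of the present proposition, so that route is circular. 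Once joint injectivity is in hand, your argument that an $A$-comodule map is the same as a family of $A_i$-comodule maps goes through as written.
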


\begin{remark}
This statement describes the sought-after categories of unitary comodules very explicitly. There is a more abstract, but also more elegant way to phrase all of this. We need some basic 2-categorical notions to say it all (as in \cite{MR2664622}).

Rigid, monoidal $W^*$-categories $\mathcal C$ endowed with monoidal $*$-functors $\mathcal C\to\cat{Hilb}$ form a bicategory in a natural way, while $\cqg$ can be regarded as a bicategory with only identity 2-cells. Then, sending a CQG algebra to its category of finite-dimensional, unitary comodules (together with its forgetful functor to $\cat{Hilb}$) is a pseudofunctor from the latter to the former. The essence of \Cref{prop.limTann} is that this pseudofunctor preserves limits. This is a familiar theme in Tannaka duality: Woronowicz's construction of a CQG algebra out of a functor $\mathcal C\to\cat{Hilb}$ is in fact nothing but the left adjoint of the pseudofunctor mentioned above. This sort of situation is treated in \cite{2011arXiv1112.5213S}, with a biadjunction analogous to the one just discussed appearing in Theorem 3.1.1.   
\end{remark}

\begin{remark}
The references to \cite{MR943923} in the above discussion are somewhat of a paraphrase, as Woronowicz works with what are called compact \define{matrix} quantum groups (or CMQG algebras on the algebraic side \cite[2.5]{MR1310296}). These are basically compact quantum groups finitely generated as $C^*$-algebras, and are analogous to compact Lie groups (the latter being precisely those compact groups which embed in some unitary group). He also distinguishes a comodule whose coefficients generate the algebra, and so works with pairs $(A,u)$, where $u\in M_n(A)$ is a unitary coefficient matrix. Adapting the results of that paper to the general setting is straightforward.

A CMQG algebra always has a countable set of (isomorphism classes of) simple comodules. As we will see in the next example, abandoning this restriction is absolutely necessary if we are going to discuss limits in $\cqg$, since products, for example, are very unlikely to satisfy this property.   
\end{remark}

\begin{example}
One does not even have to go ``quantum'' to give an example of a very large (that is, non-CMQG) product in $\cqg$. Indeed, the smallest possible example will do: a coproduct of two copies of $\mathbb Z/2$ in the category of compact groups. 

Denote this coproduct by $G$. According to the first part of \Cref{prop.limTann}, a unitary representation of $G$ consists of a finite-dimensional Hilbert space $V$ and two involutive unitary operators $x$ and $y$ on $V$. The projections $p=\frac{1+x}2$ and $q=\frac{1+y}2$ provide precisely the same information, so we work with them instead. $V$ is irreducible preciely when $p$ and $q$ have no common proper, non-zero invariant subspace. By the discussion carried out prior to the statement of \cite[Theorem 1.41]{MR1873025}, this is equivalent to the four pairwise infima $p\wedge q$, $p\wedge(1-q)$ etc. all being zero (where `$\wedge$' means orthogonal projection on the intersection of the ranges of the two projections). 

If $\dim V\ge 2$, the vanishing of all wedges $p\wedge q$, etc. makes it necessary that $V$ be even-dimensional and that $p$ and $q$ both have rank $\frac{\dim V}2$, but this is it: The set of pairs of projections satisfying the requirements is open dense in the set of all pairs of projections of rank $\frac{\dim V}2$ on $V$. Hence, if $\dim V=2n$ for some $n\ge 1$, the set of pairs $(p,q)$ that will make $V$ into an irreducible unitary $G$-representation is a manifold of dimension $4n^2$ (twice the dimension of the Grassmannian variety of $n$-dimensional subspaces of $V$ as a \define{real} manifold). We now have to quotienting by the equivalence relation $(p,q)\sim (upu^*,uqu^*)$ for unitary $u$, which accounts for isomorphic $G$-representations induced by different pairs of projections. Since the unitary group of $V$ has dimension $n(2n-1)$, this still leaves continuum many isomorphism classes of simples. 
\end{example}

We end this subsection with a note on terminology. As observed in \cite[$\S$3]{2010arXiv1010.3379S}, the name `free product of compact quantum groups' from \cite{MR1316765} is somewhat inconsistent with the prevailing point of view that compact quantum groups form a category opposite to $\cstarqg$. The problem is that in universal algebra, `free product' is often synonymous to `coproduct'. Even though Wang's construction is a coproduct in $\cstarqg$ rather than its opposite, and hence `\define{product} of compact quantum groups' would perhaps be a better fit, `free product' seems to have been established through use (besides, `product' would clash with the interpretation of $A\otimes A$ as the Cartesian square of a compact quantum group, implicit in \Cref{def.cstarqg}). On the other hand, products in $\cstarqg$ (whose existence \Cref{th.lim} proves) are probably best referred to as `coproducts of compact quantum groups'.


\subsection{Quantum groups generated by quantum spaces}\label{subse.spaces}

The idea here is that functors of the form ``forget the comultiplication'', regarded as quantum analogues of forgetting the multiplication on a group, have right adjoints. As most of the previous discussion, all of this works both algebraically and $C^*$-algebraically:

\begin{theorem}\label{th.spaces}
The functors
\begin{enumerate}
\renewcommand{\labelenumi}{(\alph{enumi})}
\item $\forget:\cqg\to\astaroh$ sending a CQG algebra to its underlying $*$-algebra and
\item $\forget:\cstarqg\to\cstaroh$ sending a compact quantum group to its underlying $C^*$-algebra
\end{enumerate}
are left adjoints.
\end{theorem}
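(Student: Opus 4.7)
The plan is to invoke the special adjoint functor theorem (\Cref{th.adj}). We already know that $\cqg$ is SAFT (being finitely presentable, by \Cref{th.cqgfinpres}) and that $\cstarqg$ is SAFT by \Cref{th.cstarqgsaft}, so in both cases the task reduces to verifying that $\forget$ is cocontinuous.

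For part (a) I would factor $\forget$ as the composite $\cqg\to\astar\hookrightarrow\astaroh$, where the first arrow is the forgetful functor to unital $*$-algebras and the second is the inclusion. The first is cocontinuous because the proof of \Cref{prop.cqgcoco} constructs both coproducts and coequalizers of CQG algebras at the level of the underlying unital $*$-algebras, and these together generate all colimits (\cite[V.2.1]{MR1712872}) in the cocomplete categories $\cqg$ and $\astar$. For the inclusion $\iota:\astar\hookrightarrow\astaroh$ I would establish cocontinuity by exhibiting it as a left adjoint, with right adjoint the multiplier algebra functor $M:\astaroh\to\astar$. The required natural bijection
\[
\astaroh(\iota X,A)\;\cong\;\astar(X,M(A)),\qquad X\in\astar,\ A\in\astaroh,
\]
follows because when $X$ is unital, any non-degenerate $*$-homomorphism $f:X\to M(A)$ satisfies $f(1_X)\cdot A=f(X)\cdot A=A$, forcing $f(1_X)=1_{M(A)}$, whereas conversely any unital $*$-homomorphism $X\to M(A)$ is automatically non-degenerate.

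Part (b) goes through verbatim: \Cref{prop.cstarqgcoco} ensures that $\cstarqg\to\cstar$ is cocontinuous, and the same multiplier-algebra argument exhibits $\cstar\hookrightarrow\cstaroh$ as left adjoint to the $C^*$-algebraic multiplier functor (with density of $f(X)A$ in $A$ replacing equality). The only point that requires appeal to background, and hence the closest thing to an obstacle here, is the functoriality of $M$, i.e.\ that a non-degenerate morphism $A\to M(B)$ extends uniquely to a unital $*$-homomorphism $M(A)\to M(B)$; this is a standard fact about multiplier algebras in both the algebraic and $C^*$ settings, recorded in the sources cited in \Cref{subse.locallycompact}, and everything else in the argument is formal.
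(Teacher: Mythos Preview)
Your proposal is correct and follows essentially the same route as the paper: factor $\forget$ through the unital subcategory, use the proofs of \Cref{prop.cqgcoco} and \Cref{prop.cstarqgcoco} to see that the first factor is cocontinuous, observe that the inclusion of unital into non-unital $*$- or $C^*$-algebras is left adjoint to the multiplier functor $M$, and conclude via SAFT. You give a bit more detail on the adjunction $\iota\dashv M$ than the paper does, but the argument is the same.
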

\begin{proof}
We already know from the proofs of \Cref{prop.cqgcoco,prop.cstarqgcoco} that the forgetful functors from $\cqg$ and $\cstarqg$ to \define{unital} $*$-algebras and $C^*$-algebras respectively are cocontinuous. I claim that so are the inclusions $\astar\to\astaroh$ and $\cstar\to\cstaroh$. Assuming this for now, (a) and (b) are cocontinuous; that they are left adjoints then follows from the SAFT-ness of $\cqg$ and $\cstarqg$ (\Cref{th.cqgfinpres,th.cstarqgsaft}) and the adjoint functor theorem.

Finally, to prove the claim made above that the two inclusions $\astar\to\astaroh$ and $\cstar\to\cstaroh$ are cocontinuous, note that they are in fact left adjoints: In both cases, the right adjoint is the multiplier algebra construction $A\mapsto M(A)$. 
\end{proof}

\begin{remark}\label{rem.quantumcoll}
Von Neumann algebras would be an alternative way to formalize the idea of ``quantum space''. This is the point of view espoused in \cite{2012arXiv1202.2994K}, where the category $\wstar$ of unital von Neumann algebras and unital normal homomorphisms is opposite to the category of so-called quantum collections. The idea here is that a von Neumann algebra is a quantum analogue of a set, ordinary sets $X$ corresponding to $\ell^\infty(X)$. 

Adopting this perspective, the enveloping $W^*$-algebra functor $\env:\cstar\to\wstar$ is a kind of forgetful functor, disregarding the topological side of a compact quantum space and remembering only the underlying quantum collection; similarly, the forgetful functor $\wstar\to\cstar$ (which is right adjoint to $\env$) is a kind of quantum Stone-Cech compactification. 

Composing (b) of \Cref{th.spaces} further with $\env$ is again a left adjoint (the composition of two left adjoints), and its right adjoint could be thought of as the functor associating to every quantum ollection the compact quantum group freely generated by it. 
\end{remark}

We refer to the image of a $*$ or $C^*$-algebra $A$ through the respective right adjoint as the \define{cofree} CQG algebra or $\cstarqg$ object on $A$ (`co' because it universally maps into $A$, as opposed to being mapped into). The notation $\cof$ stands for either of the two functors, and we rely on context to distinguish between the possibilities.

It is to be expected in such cofree-Hopf-algebra-on-an-algebra type constructions that commutativity will be preserved (as explained, for instance, in the introduction of \cite{MR2745565}). In other words, one would like the right adjoint from part (b), say, when applied to the algebra of functions vanishing at infinity on a locally compact space $X$, to yield precisely the compact group freely generated by $X$: Construct the abstract group $G$ freely generated by $X$, endow it with the strongest topology making the canonical map $X\to G$ continuous, and take the Bohr compactification of the resulting topological group. That this is indeed the case is essentially the content of the following proposition:

\begin{proposition}\label{prop.spacescomm}
If $A$ is a commutative $*$ or $C^*$-algebra, then $\cof(A)$ is commutative. 
\end{proposition}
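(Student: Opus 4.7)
The plan is to exhibit the canonical surjection $\pi\colon\cof(A)\to\cof(A)_{ab}$ (the quotient onto the abelianization in $\cqg$ or $\cstarqg$ provided by \Cref{prop.limcomm}) as an isomorphism. Let $\varepsilon\colon\forget(\cof(A))\to A$ denote the counit of the adjunction $\forget\dashv\cof$ at $A$.

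Since $A$ is commutative, so is its multiplier algebra $M(A)$, so the non-degenerate $*$-homomorphism underlying $\varepsilon$ annihilates every commutator (and its closure in the $C^*$ case). It therefore factors through the abelianization $\forget(\cof(A))_{ab}$. The construction in the proof of \Cref{prop.limcomm} identifies this abelianization, as an object of $\astaroh$ or $\cstaroh$, with $\forget(\cof(A)_{ab})$: both are obtained by dividing the underlying (unital) algebra by the (closed) commutator ideal, which is automatically a coideal and stable under $*$. One thus obtains a morphism $\varepsilon'\colon\forget(\cof(A)_{ab})\to A$ with $\varepsilon = \varepsilon'\circ\forget(\pi)$.

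By the universal property of the cofree object $\cof(A)$, the morphism $\varepsilon'$ corresponds to a unique arrow $\widetilde\varepsilon\colon\cof(A)_{ab}\to\cof(A)$ in $\cqg$ (resp.\ $\cstarqg$). I then form the composition $\widetilde\varepsilon\circ\pi\colon\cof(A)\to\cof(A)$ and compute its image under the adjunction:
$$\varepsilon\circ\forget(\widetilde\varepsilon\circ\pi) \;=\; \varepsilon\circ\forget(\widetilde\varepsilon)\circ\forget(\pi) \;=\; \varepsilon'\circ\forget(\pi) \;=\; \varepsilon.$$
The uniqueness clause in the adjunction forces $\widetilde\varepsilon\circ\pi=\id_{\cof(A)}$, so the surjection $\pi$ is split and hence an isomorphism. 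This gives $\cof(A)\cong\cof(A)_{ab}$, which is commutative.

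The only substantive point in the argument is the equality $\forget(\cof(A))_{ab}=\forget(\cof(A)_{ab})$, i.e.\ the compatibility of abelianization with the forgetful functor; but this is exactly what the construction of $A\mapsto A_{ab}$ in \Cref{prop.limcomm} delivers, since the comultiplication, counit, antipode, and $*$-structure all descend along the quotient by the (closed) commutator ideal. Everything else is pure formal nonsense about counits and universal properties.
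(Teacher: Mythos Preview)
Your proof is correct and follows essentially the same approach as the paper's: both factor the counit $\cof(A)\to A$ through the abelianization $\pi\colon\cof(A)\to\cof(A)_{ab}$ (using that $A$ is commutative), then use the universal property of $\cof(A)$ to produce a section $\cof(A)_{ab}\to\cof(A)$ (the paper's $\iota$, your $\widetilde\varepsilon$), concluding that $\pi$ is an isomorphism. Your treatment is somewhat more explicit about the adjunction formalism and the identification $\forget(\cof(A))_{ab}=\forget(\cof(A)_{ab})$, but the argument is the same.
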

\begin{proof}
To fix ideas, we prove the algebraic statement regarding $*$-algebras, and leave the simple modifications that will adapt the proof to the other cases to the reader.     

Let $H=\cof(A)$, and recall the CQG algebra structure on $H_{ab}$ from \Cref{prop.limcomm}. The universality property of the abelianization factorizes the left hand diagonal arrow in the following diagram through the right hand one, while the cofree-ness gives the other commutative triangle, passing through $\iota$: 
\[
 \begin{tikzpicture}[anchor=base]
   \path (0,0) node (1) {$H$} +(4,0) node (2) {$H_{ab}$} +(2,-2) node (3) {$A$}; 
   \draw[->] (1) .. controls (1,.5) and (3,.5) .. (2) node[pos=.5,auto] {$\pi$};
   \draw[->] (2) .. controls (3,-.3) and (1,-.3) .. (1) node[pos=.5,auto] {$\iota$};
   \draw[->] (1) -- (3);
   \draw[->] (2) -- (3);
 \end{tikzpicture}
\]
By cofree-ness again, the loop $\iota\circ\pi$ must be the identity; since $\pi$ was a surjection, it must be an isomorphism.
\end{proof}


\subsection{Variations on the Bohr compactification theme}\label{subse.bohr}

A right adjoint to the inclusion functor $\cstarqg\to\cstarohqs$ is constructed directly in \cite{MR2210362}, and this construction is referred to as the \define{quantum Bohr compactification}. For any $A\in\cstarohqs$, it provides an object $H\in\cstarqg$ mapping universally into $A$ so as to preserve the comultiplication; remembering the arrow reversal inherent to passing from spaces to functions on them, this should indeed be thought of as a compact quantum group into which the locally compact quantum semigroup maps universally. Moreover, when $A$ is commutative and hence the algebra of functions vanishing at infinity on a locally compact semigroup $X$, the construction returns precisely the algebra of functions on the ordinary Bohr compactification of $X$ (\cite[4.1]{MR2210362}). 

We recover these results and their algebraic counterparts below (\Cref{th.bohr} and \Cref{prop.bohrcomm}), as applications of the categorical machinery already in place.

\begin{theorem}\label{th.bohr}
The inclusion functors
\begin{enumerate}
\renewcommand{\labelenumi}{(\alph{enumi})}
\item $\forget:\cqg\to\astarohqs$ sending a CQG algebra to its underlying algebraic quantum semigroup and
\item $\forget:\cstarqg\to\cstarohqs$ sending a compact quantum group to its underlying compact quantum group
\end{enumerate}
are left adjoints.
\end{theorem}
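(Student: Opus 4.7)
My plan is to reduce the statement to a cocontinuity check via the Special Adjoint Functor Theorem (\Cref{th.adj}). Both $\cqg$ and $\cstarqg$ are SAFT, by \Cref{th.cqgfinpres} and \Cref{th.cstarqgsaft} respectively, so it suffices to verify that each of the two inclusion functors preserves all small colimits.

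For part (a), since $\cqg$ is cocomplete it is enough to treat coproducts and coequalizers. The proof of \Cref{prop.cqgcoco} shows that both colimits in $\cqg$ are computed as the corresponding colimits in $\astar$, with the Hopf $*$-algebra structure induced. Combined with the cocontinuity of $\astar\hookrightarrow\astaroh$ established inside \Cref{th.spaces}, this means that for a diagram $F\colon J\to\cqg$ with colimit $A$ and coprojections $\iota_j\colon F(j)\to A$, the object $A$ is also the colimit in $\astaroh$ of $F$ viewed through $\forget$. Now given any cocone $h_j\colon F(j)\to B$ in $\astarohqs$, the universal property in $\astaroh$ furnishes a unique non-degenerate $*$-homomorphism $h\colon A\to M(B)$ with $h\circ\iota_j=h_j$. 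It remains to verify that this $h$ respects comultiplication, i.e.\ $(h\otimes h)\circ\Delta_A=\Delta_B\circ h$ as maps $A\to M(B\otimes B)$. Both sides are $*$-homomorphisms, so the equality can be checked on a generating subset; taking $\bigcup_j\iota_j(F(j))\subseteq A$, it reduces immediately to the assumed compatibility of each $h_j$ with its own comultiplication together with the fact that each $\iota_j$ is a $\cqg$-morphism.

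Part (b) is handled by the identical argument, substituting \Cref{prop.cstarqgcoco} for \Cref{prop.cqgcoco} and invoking the cocontinuity of $\cstar\hookrightarrow\cstaroh$ from \Cref{th.spaces} in place of the algebraic version. The one step I expect to demand some mild care, and thus the main (though not severe) obstacle, is the interpretation of $h\otimes h$ and of the comultiplicativity equation when $B$ is non-unital: one must compose through the canonical extension $M(B)\otimes M(B)\to M(B\otimes B)$ (with the minimal $C^*$ tensor product in the $\cstarohqs$ case) and check that the resulting $*$-homomorphism is the one to which the generating-set argument applies. Once these identifications are made explicit the verification reduces to the case worked out in part (a), completing the proof.
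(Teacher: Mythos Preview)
Your proposal is correct and follows essentially the same route as the paper: reduce via SAFT to cocontinuity, use \Cref{prop.cqgcoco}/\Cref{prop.cstarqgcoco} together with the cocontinuity of $\astar\hookrightarrow\astaroh$ (resp.\ $\cstar\hookrightarrow\cstaroh$) from \Cref{th.spaces} to get the unique factorization at the level of algebras, and then verify comultiplicativity. The paper phrases the last step as a diagram chase for the coproduct case (and waves at the other three cases), while you phrase it as ``check on the generating set $\bigcup_j\iota_j(F(j))$''; these are the same argument, and your flag about extending $h\otimes h$ through $M(B)\otimes M(B)\to M(B\otimes B)$ in the non-unital case is a fair technical point the paper leaves implicit.
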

\begin{proof}
As before, \Cref{th.cqgfinpres,th.cstarqgsaft} and the adjoint functor theorem ensure that we only need to prove the two functors cocontinuous. Equivalently, this means showing they preserve coequalizers of pairs and coproducts. The four arguments (coequalizers and coproducts, (a) and (b)) follow essentially the same path, so let us focus on coproducts for part (a).

Let $I$ be a set, and $A_i$, $i\in I$ objects in $\cqg$. Let also $f_i:A_i\to B$ be morphisms in $\astarohqs$ (strictly speaking, they are morphisms $\forget(A_i)\to B$, but since $\forget$ really is just an inclusion, we omit it in the rest of the proof). Since forgetting further to $\astaroh$ (i.e. disregarding comultiplications) is, according to part (a) of \Cref{th.spaces}, cocontinuous, the $f_i$ aggregate into a unique $*$-algebra morphism $f:A=\coprod A_i\to B$. We are done if we can show that $f$ preserves comultiplications. To see this, consider the diagram
\[
	\begin{tikzpicture}[anchor=base]
   	\path (0,0) node (1) {$A_i$} +(3,0) node (2) {$A$} +(6,0) node (3) {$B$} +(0,-2) node (4) {$A_i\otimes A_i$} +(3,-2) node (5) {$A\otimes A$} +(6,-2) node (6) {$B\otimes B$}; 
   	\draw[->] (1) -- (2) node[pos=.5,auto] {$\scriptstyle \iota_i$};
   	\draw[->] (2) -- (3) node[pos=.5,auto] {$\scriptstyle f$};
   	\draw[->] (4) -- (5) node[pos=.5,auto] {$\scriptstyle \iota_i\otimes \iota_i$};
   	\draw[->] (5) -- (6) node[pos=.5,auto] {$\scriptstyle f\otimes f$};
   	\draw[->] (1) -- (4);
   	\draw[->] (2) -- (5);
   	\draw[->] (3) -- (6);
  \end{tikzpicture}
\]
where the vertical arrows are comultiplications, and $\iota_i:A_i\to A$ are the structure maps of the coproduct. The commutativity of the outer rectangle is the preservation of comultiplications by the $f_i=f\circ\iota_i$, while the left hand square commutes because $A$ was defined as the coproduct of the $A_i$ in $\cqg$. It follows that precomposing the two possible ways to get from $A$ to $B\otimes B$ with $\iota_i$ yields the same morphism $A_i\to B\otimes B$; then, by the universality in $\astaroh$ of the coproduct $A$, the right hand square must also be commutative.    
\end{proof}

\begin{remark}\label{rem.quantumcollbis}
Part (b) of the proposition can be tweaked slightly in the spirit of \Cref{rem.quantumcoll}. 

The enveloping von Neumann algebra functor $\env:\cstaroh\to\wstar$ can be lifted to a functor (again called $\env$) from $\cstarohqs$ to the category $\wstarqs$ of \define{von Neumann quantum semigroups}, consisting of von neumann algebras $M$ endowed with a coassociative morphism $\Delta:M\to M\otimes M$ in $\wstar$ (remember that the tensor product of von Neumann algebras here is the spatial one) and with $\wstar$ maps which preserve these comultiplications as morphisms. It is to be thought of as a kind of forgetful functor, ignoring the topology of a locally compact quantum semigroup and remembering only the underlying quantum collection, together with the ``multiplication''.

It can be shown further that $\env\circ\forget:\cstarqg\to\wstarqs$ is cocontinuous, and hence a left adjoint. In other words, every $W^*$ quantum semigroup has a quantum Bohr compactification.  
\end{remark}

\begin{remark}\label{rem.noleftadj}
In a $C^*$-algebraic variant of the Tambara construction \cite{MR1071429}, (a particular case of) \cite[Theorem 3.3]{MR2501746} constructs, for every finite-dimensional $C^*$-algebra $A$, an object $B$ of $\cstarqs$ (the full subcategory of $\cstarohqs$ consisting of unital algebras) coacting universally on $A$. In other words, there is a coassociative $C^*$-algebra map $A\to A\otimes B$ making $B$ an initial object in the category of objects of $\cstarqs$ endowed with such maps. 

If $B$ were to map universally into an object $B'\in\cstarqg$, the latter would be the \define{quantum automorphism group} of $A$ in the sense of \cite{MR1637425}. However, we know from \cite[Theorem 6.1 (1)]{MR1637425} that finite-dimensional $C^*$-algebras do not have compact quantum automorphism groups, in general. In conclusion, although $\cstarqg\to\cstarqs$ is a left adjoint (by a variant of \Cref{th.bohr}), it is \define{not} a right adjoint. 
\end{remark}

As in \Cref{subse.spaces}, we denote the right adjoints of \Cref{th.bohr} by $\cof$. Once more, it turns out that they preserve commutativity.

\begin{proposition}\label{prop.bohrcomm}
If the object $A$ of $\astarohqs$ or $\cstarohqs$ is commutative, so is $\cof(A)$.  
\end{proposition}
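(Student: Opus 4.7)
The plan is to mirror the argument of \Cref{prop.spacescomm}, using the abelianizations in $\cqg$ and $\cstarqg$ constructed (as left adjoints to the inclusions $\cqgab\to\cqg$ and $\cstarqgab\to\cstarqg$) in the proof of \Cref{prop.limcomm}. Set $H=\cof(A)$, let $\pi:H\to H_{ab}$ denote the canonical surjection onto the abelianization---a morphism in $\cqg$ or $\cstarqg$, and hence \emph{a fortiori} in $\astarohqs$ or $\cstarohqs$---and let $\eta:H\to A$ denote the universal arrow provided by the adjunction of \Cref{th.bohr}.

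The first substantive step is to factor $\eta$ through $\pi$. Since $A$ is commutative, the universal property of abelianization at the level of underlying $*$- or $C^*$-algebras (applied to the underlying-algebra map of $\eta$) produces a unique algebra map $\overline\eta:H_{ab}\to A$ with $\eta=\overline\eta\circ\pi$. One then verifies that $\overline\eta$ is in fact a morphism in $\astarohqs$ (resp.\ $\cstarohqs$): the computation $(\overline\eta\otimes\overline\eta)\circ\Delta_{H_{ab}}\circ\pi=(\overline\eta\otimes\overline\eta)\circ(\pi\otimes\pi)\circ\Delta_H=(\eta\otimes\eta)\circ\Delta_H=\Delta_A\circ\eta=\Delta_A\circ\overline\eta\circ\pi$, combined with the fact that $\pi$ is a (surjective, hence epi) morphism in the ambient algebra category, yields $(\overline\eta\otimes\overline\eta)\circ\Delta_{H_{ab}}=\Delta_A\circ\overline\eta$.

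Next, the adjunction of \Cref{th.bohr} lifts $\overline\eta$ (as an $\astarohqs$- or $\cstarohqs$-morphism) to a unique $\iota:H_{ab}\to H$ in $\cqg$ (resp.\ $\cstarqg$) satisfying $\eta\circ\iota=\overline\eta$. The composite $\iota\circ\pi:H\to H$ then lies in $\cqg$ (resp.\ $\cstarqg$) and satisfies $\eta\circ(\iota\circ\pi)=\overline\eta\circ\pi=\eta$; since $\eta$ itself corresponds to $\id_H$ under the adjunction bijection, uniqueness forces $\iota\circ\pi=\id_H$. Thus the surjection $\pi$ has a right inverse in $\cqg$ (resp.\ $\cstarqg$), making it an isomorphism, so $H\cong H_{ab}$ is commutative.

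The only real obstacle I anticipate is the technical verification that $\overline\eta$ respects comultiplication in the $\cstarohqs$ setting, where tensor products are minimal and morphisms may land in multiplier algebras; one must confirm both that $\pi\otimes\pi$ remains surjective (hence epi) after completion and that the displayed cancellation is valid at the level of $\cstaroh$-morphisms. The remainder of the argument is an exact reprise of \Cref{prop.spacescomm}, with the Bohr-compactification adjunction playing the role of the cofree adjunction to quantum spaces.
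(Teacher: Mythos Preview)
Your proof is correct and follows essentially the same route as the paper's: factor the universal map $H\to A$ through the abelianization $H_{ab}$, check (via the argument of \Cref{prop.limcomm}) that the factored map respects comultiplications, and then run the \Cref{prop.spacescomm} argument to conclude $\pi$ is an isomorphism. The paper is in fact terser than you are---it simply cites the proofs of \Cref{prop.limcomm} and \Cref{prop.spacescomm} rather than writing out the diagram chase---and, like you, it restricts attention to the $\astarohqs$ case rather than addressing the multiplier-algebra subtleties in $\cstarohqs$.
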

\begin{proof}
We focus on the $\astarohqs$ case.

Setting $H=\cof(A)$, the canonical map $f:H\to A$ factors as
\[
 \begin{tikzpicture}[anchor=base]
   \path (0,0) node (1) {$H$} +(4,0) node (2) {$H_{ab}$} +(2,-2) node (3) {$A$}; 
   \draw[->] (1) -- (2) node[pos=.5,auto] {$\scriptstyle \pi$};
   \draw[->] (1) -- (3) node[pos=.5,auto,swap] {$\scriptstyle f$};
   \draw[->] (2) -- (3) node[pos=.5,auto] {$\scriptstyle f_{ab}$};
 \end{tikzpicture}
\]
for some morphism $f_{ab}$ in $\astaroh$. Essentially the same argument as the one used in the proof of \Cref{prop.limcomm} shows that $f_{ab}$ is actually a morphism in $\astarohqs$. Finally, we can now repeat the proof of \Cref{prop.spacescomm} to conclude that $\pi$ is in fact an isomorphism, and hence $H$ is commutative. 
\end{proof}

\begin{remark}
\Cref{prop.bohrcomm} goes through in the setting of \Cref{rem.quantumcollbis}: If $M$ is a commutative von Neumann algebra, then it can be shown in much the same way as above that the quantum Bohr compactification is commutative. 
\end{remark}


\subsection{Kac quotients}\label{subse.kac}

Recall that a compact quantum group $A\in\cstarqg$ is said to be \define{of Kac type} if the antipode on $\alg(A)$ lifts to a continuous map $A\to A$. Equivalently, the antipode of $\alg(A)$ is involutive ($S^2=\id$), or commutes with the $*$ operation. This definition extends in the obvious way to CQG algebras; in that case, `of Kac type' or simply `Kac' will be synonymous to `having involutive antipode'. Alternate terms are `Kac algebra' (under which these objects and their relatives were introduced; e.g. \cite{MR1215933} and the references therein) or sometimes `Woronowicz-Kac algebra' (as in \cite{MR1734250}). For brevity, we will sometimes simply use `Kac' as an adjective. A $k$ subscript on either $\cqg$ of $\cstarqg$ indicates the full subcategory on objects of Kac type.

In \cite[Appendix]{MR2210362}, So\l tan constructs what in that paper is called \define{the canonical Kac quotient} of an object $A\in\cstarqg$ (notion originally due to of Stefaan Vaes). It is obtained by quotienting out all elements of $A$ killed by some trace (meaning, as usual, that the trace sends $x^*x$ to zero). This, however, seems to be a bit of a misnomer: While it is shown in \cite[A.1]{MR2210362} that the result is indeed a compact quantum group of Kac type, it is not clear that a Kac compact quantum group is its own canonical Kac quotient\footnote{I am grateful to Piotr So\l tan for pointing this out.}! A more appropriate term might be, perhaps, canonical \define{tracial} quotient: One quotients out as much as one needs to in order to ensure that the result has enough traces. 

The notion has also received attention in \cite{MR2335776}, where Tomatsu shows in Theorem 4.8 that a compact quantum group $A$ has a largest quotient of Kac type (he uses dual phrasing, thinking of the latter as a largest compact quantum \define{subgroup} of Kac type) provided $A$ is coamenable and the Grothendieck ring of its category of comodules (its so-called fusion algebra) is commutative. Regarding the terminology problem from the previous paragraph, note that for the reason pointed out there, it is not clear, a priori, that Tomatsu's quotient is the same as So\l tan's. The two do coincide, however, if the Haar measure of the compact quantum group is faithful (which is the standing assumption of \cite{MR2335776}), hence \cite[Remark 4.9]{MR2335776}. 

In conclusion, the question of whether or not \define{every} $A\in\cqg$ or $\cstarqg$ has a largest Kac quotient seems to be an interesting one. One of the aims of this subsection is to show that this is indeed the case: The desired quotient map is precisely the reflection of $A$ in the subcategory $\cqg_k$ or $\cstarqg_k$ (i.e. the image of $A$ through the left adjoint to the inclusion of the subcategory into $\cqg$ or $\cstarqg$, respectively).

\begin{theorem}\label{th.kac}
The inclusions $\cqg_k\to\cqg$ and $\cstarqg_k\to\cstarqg$ each have a left, as well as a right adjoint. 
\end{theorem}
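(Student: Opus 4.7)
The plan is to construct both adjoints by hand, first in $\cqg$ (where cosemisimplicity and the explicit action of $S^{2}$ on matrix subcoalgebras make everything transparent), and then to transport the constructions to $\cstarqg$ by $C^{*}$-completion.

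For the left adjoint $L\dashv I$ on $\cqg$: given $A\in\cqg$, let $J\subseteq A$ be the two-sided ideal generated by $\{S^{2}(a)-a:a\in A\}$. The identities $\Delta\circ S^{2}=(S^{2}\otimes S^{2})\circ\Delta$, $\varepsilon\circ S^{2}=\varepsilon$, $S\circ S^{2}=S^{2}\circ S$, and $S^{2}\circ *=*\circ S^{-2}$ (the last derived from $S(a^{*})=S^{-1}(a)^{*}$) combine to show $J$ is a Hopf $*$-ideal. By \Cref{rem.quotientCQG}, $L(A):=A/J$ is then a CQG algebra on which $S^{2}=\id$, i.e., of Kac type, and any morphism $A\to B$ in $\cqg$ with $B$ Kac annihilates each $S^{2}(a)-a$, hence $J$, and so factors uniquely through $L(A)$.

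For the right adjoint $I\dashv R$ on $\cqg$: recall from the proof of \Cref{prop.reggen} that $A=\bigoplus_{\alpha}C^{\alpha}$ decomposes into matrix subcoalgebras, with $S^{2}$ acting on $C^{\alpha}$ by conjugation by a positive operator $Q^{\alpha}$. Define $R(A):=\bigoplus_{\alpha:\,Q^{\alpha}\text{ scalar}}C^{\alpha}$; equivalently, $R(A)=\{a\in A:S^{2}(a)=a\}$. This is manifestly a subcoalgebra and is both $S$- and $*$-stable (the latter via $S^{2}\circ *=*\circ S^{-2}$ and the invertibility of $S^{2}$), and since $S^{2}$ is an algebra automorphism its fixed set is a subalgebra. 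Hence $R(A)$ is a Kac sub-CQG-algebra of $A$. Any morphism $B\to A$ with $B$ Kac intertwines the two $S^{2}$'s, so its image lies where $S^{2}$ acts trivially, namely in $R(A)$; this yields the universal property.

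For $\cstarqg$, the same arguments repeat at the $C^{*}$-level. The left adjoint is $A\mapsto A/\overline{J}$, where $\overline{J}$ is the closed two-sided ideal of $A$ generated by $\{S^{2}(a)-a:a\in\alg(A)\}$; the right adjoint sends $A$ to the closed $*$-subalgebra of $A$ generated by $R(\alg(A))$. Continuity of $\Delta$ and of $*$ upgrade the ideal/subalgebra properties from the dense CQG subalgebra, and Kac type is inherited from $\alg(A)/(J\cap\alg(A))$ and $R(\alg(A))$ respectively. The main technical obstacle is verifying the antipode/density axiom of \Cref{def.cstarqg} for these $C^{*}$ constructions: that $\Delta(A/\overline{J})(1\otimes A/\overline{J})$ is dense in the minimal tensor product $(A/\overline{J})\otimes(A/\overline{J})$, and that $\Delta$ and its flip have dense image on the closed subalgebra. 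In both cases this reduces to the algebraic Hopf-algebraic density on the dense CQG subalgebra plus continuity. As an alternative, one can invoke Theorem~\ref{th.adj} and the $\univ\dashv\alg$ adjunction to transport the $\cqg$-level adjunctions to $\cstarqg$, using the SAFT-ness from \Cref{th.cstarqgsaft} to avoid verifying the solution-set condition by hand.
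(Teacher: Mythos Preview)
Your construction of the left adjoint on $\cqg$ (quotienting by the Hopf $*$-ideal generated by $S^2(a)-a$) is correct and more explicit than the paper's argument, which obtains the left adjoint abstractly via Freyd's initial object theorem applied to the comma category $A\downarrow\cqg_k$.

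There is, however, a genuine gap in your construction of the right adjoint on $\cqg$. You claim that
\[
R(A)\ =\ \bigoplus_{\alpha:\,Q^\alpha\text{ scalar}} C^\alpha
\quad\text{equals}\quad
\{\,a\in A : S^2(a)=a\,\},
\]
and you rely on the second description to conclude that $R(A)$ is a subalgebra. The two sets are \emph{not} equal. Take $A=A_u(Q)$ with $Q=\mathrm{diag}(1,2)$: the standard $2$-dimensional comodule is irreducible with non-scalar $Q$, so its coefficient coalgebra contributes nothing to the direct sum; yet $S^2(u_{11})=u_{11}$, so $u_{11}$ lies in the fixed set. Conversely, the fixed set of $S^2$ is \emph{not} a subcoalgebra: $\Delta(u_{11})=u_{11}\otimes u_{11}+u_{12}\otimes u_{21}$, and while this tensor is fixed by $S^2\otimes S^2$ (the eigenvalues $\tfrac12$ and $2$ cancel), it does not lie in $\mathrm{Fix}(S^2)\otimes\mathrm{Fix}(S^2)$. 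So neither description alone gives you a sub-Hopf-$*$-algebra by the reasoning you wrote down.

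The direct-sum description \emph{does} define a sub-CQG-algebra, but the argument has to be different. The point is that if $Q^\alpha$ and $Q^\beta$ are scalar, then $S^2=\id$ on $C^\alpha$ and $C^\beta$, hence on the span $C^\alpha C^\beta$; this span is the coefficient coalgebra of $V^\alpha\otimes V^\beta$, which decomposes as $\sum_\gamma C^\gamma$ over the simple constituents, and $S^2=\id$ on each $C^\gamma$ forces $Q^\gamma$ scalar. This replaces your fixed-set argument. The universal property then also needs the extra step you omitted: the image of any map from a Kac $B$ lands in the fixed set of $S^2$, but to conclude it lands in $R(A)$ one uses that the image is a subcoalgebra of the cosemisimple $A$, hence a direct sum of certain $C^\alpha$'s, each of which must then have $S^2=\id$.

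By contrast, the paper avoids all of this by arguing abstractly: it shows $\cqg_k$ and $\cstarqg_k$ are themselves SAFT (so the inclusions, being cocontinuous, are left adjoints), and obtains the left adjoints from Freyd's initial object theorem by checking that $\cqg_k$ is closed under limits in $\cqg$ (equalizers trivially; products via the observation that the product of involutive antipodes is the antipode of the product, hence involutive). Your approach, once patched, is more concrete and actually identifies the adjoints; the paper's buys uniformity with the rest of the article at the cost of explicitness.
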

\begin{proof}
That the inclusions are both left adjoints is shown in much the same way in which we have proven all results asserting the existence of various right adjoints so far. The arguments of \Cref{se.cqgfinpres,se.cstarqgsaft} can be repeated to show that $\cqg_k$ is finitely presentable and $\cstarqg_k$ is SAFT. The only difference that is even remotely significant is the fact that now the regular generator to go into an analogue of \Cref{prop.reggen} consists of $A_u(I_n)$, $n\ge 1$ rather than all $A_u(Q)$. Colimits are again constructed simply at the level of $*$ or $C^*$-algebras, making it clear that the inclusions are cocontinuous and hence left adjoints.

The interesting problem, then, is the one discussed before the statement of the theorem: constructing \define{left} adjoints to the two inclusions. For each $A\in\cqg$ or $\cstarqg$ we want an arrow $\kappa:A\to A_k$ into a Kac type object, universal in the sense that any morphism from $A$ into a Kac object factors uniquely through $\kappa$. In other words, we have to show that the comma category $A\downarrow\cqg$ (resp. $A\downarrow\cstarqg$) consisting of arrows from $A$ into Kac objects has an initial object. To do this, we apply Freyd's initial object theorem \cite[V.6.1]{MR1712872}. It says that a complete category has an initial object as soon as it has a \define{weakly initial set} of objects; this simply means a set $S$ of objects such that any object $y$ admits at least one arrow $S\ni x\to y$ (not necessarily unique). 

In our case, a weakly initial set is easy to come by: All surjections $A\to B$ for Kac type $B$ will do, since any map of $A$ into a Kac type object will certainly factor through the image of that map. On the other hand completeness follows quickly if we show that $\cqg_k$ and $\cstarqg_k$ are closed under limits in $\cqg$ and $\cstarqg$ respectively: Limits would then be created by the forgetful functor $A\downarrow\cqg_k\to\cqg_k\to\cqg$ (and similarly for $\cstarqg$).  

In conclusion, it is enough to show that products of Kac objects in $\cqg$ (or $\cstarqg$) are again Kac, and similarly, equalizers of parallel pairs of arrows between Kac objects are Kac. Since (a) $\alg:\cstarqg\to\cqg$ is a right adjoint and hence preserves limits, and (b) by definition, an object $B\in\cstarqg$ is Kac if and only if $\alg(B)$ is, it is enough to restrict ourselves to $\cqg$. 

Equalizers are easy: If $f,g:B\to C$ are arrows between Kac CQG algebras, the equalizer injects into $B$, so it is again Kac. To prove the statement about products, let $B_i$, $i\in I$ be a set of Kac CQG algebras, and denote the structure maps of their product in $\cqg$ by $\pi_i:B\to B_i$. Throughout the rest of this proof, for a CQG algebra $C$, we denote by $C'$ the CQG algebra with the same underlying set, but reversed multiplication and comultiplication. Note that the product of the objects $B_i'$ is precisely $B'$. Let $S_i$ be the antipodes of $B_i$, and $S=\prod S_i:B\to B'$ the map obtained from the functoriality of products. By this same functoriality, $S$ is involutive (strictly speaking, this means $S'\circ S=\id$, where $S':B'\to B$ is $S$ as a map, but we have switched the domain and codomain). If we show that $S$ is the antipode $S_B$ of $B$, we are done (Kac means involutive). To prove this, note that the two squares in the diagram 
\[
 \begin{tikzpicture}[anchor=base]
   \path (0,0) node (1) {$B$} +(2,0) node (2) {$B'$} +(0,-2) node (3) {$B_i$} +(2,-2) node (4) {$B_i'$}; 
   \draw[->] (1) .. controls (1,.3) and (1,.3) .. (2) node[pos=.5,auto] {$\scriptstyle S$};
   \draw[->] (1) .. controls (1,-.1) and (1,-.1) .. (2) node[pos=.5,auto,swap] {$\scriptstyle S_B$};
   \draw[->] (3) -- (4) node[pos=.5,auto] {$\scriptstyle S_i$};
   \draw[->] (1) -- (3) node[pos=.5,auto,swap] {$\scriptstyle \pi_i$};
   \draw[->] (2) -- (4) node[pos=.5,auto] {$\scriptstyle \pi_i$};
 \end{tikzpicture}
\]
are both commutative (the $S$-square by the definition of $S$ as the product of the $S_i$, and the $S_B$-square because $\pi_i$ are Hopf algebra morphisms and hence preserve antipodes).
\end{proof}

Denote by $A\mapsto A_k$ the left adjoint to either of the inclusions $\cqg_k\to\cqg$ or $\cstarqg_k\to\cstarqg$. It is a simple observation now that the canonical map $A\to A_k$ is a surjection, and hence warrants the name `Kac \define{quotient}':

\begin{proposition}
Let $A\in\cqg$ or $\cstarqg$ and $\kappa:A\to A_k$ the universal arrow resulting from the unit of the adjunction that the Kac quotient functor is part of. Then, $\kappa$ is onto. 
\end{proposition}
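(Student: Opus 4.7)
The plan is to factor $\kappa$ through its set-theoretic image and invoke the universal property of the reflection. Write $\kappa = i\circ p$, where $p\colon A\twoheadrightarrow \kappa(A)$ is the corestriction onto the image and $i\colon\kappa(A)\hookrightarrow A_k$ is the inclusion. The first task will be to argue that $\kappa(A)$ is a subobject of $A_k$ in the appropriate category. In $\cqg$ this is immediate: the image of a morphism of Hopf $*$-algebras is closed under multiplication, comultiplication, antipode and $*$. In $\cstarqg$ one uses that the image of a $*$-homomorphism of $C^*$-algebras is automatically closed, and that
\[
	(\kappa\otimes\kappa)\bigl(\Delta_A(A)(1\otimes A)\bigr) = \Delta_{A_k}(\kappa(A))(1\otimes\kappa(A))
\]
is dense in $\kappa(A)\otimes\kappa(A)$ by the continuity of $\kappa\otimes\kappa$ (and similarly for the other slot); this shows that the (Antipode) density condition of \Cref{def.cstarqg} descends to $\kappa(A)$, which is therefore a legitimate object of $\cstarqg$.

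Once $\kappa(A)$ is available as an object of $\cqg$ or $\cstarqg$, its antipode is simply the restriction of that of $A_k$, hence involutive; so $\kappa(A)\in\cqg_k$ (resp.\ $\cstarqg_k$). Applying the universal property of $\kappa\colon A\to A_k$ to the morphism $p\colon A\to\kappa(A)$ into a Kac object then produces a unique $\tilde p\colon A_k\to\kappa(A)$ with $\tilde p\circ\kappa = p$. Composing with $i$ yields $i\circ\tilde p\circ\kappa = i\circ p = \kappa = \id_{A_k}\circ\kappa$, so both $\id_{A_k}$ and $i\circ\tilde p$ are morphisms $A_k\to A_k$ (between Kac objects) whose precomposition with $\kappa$ recovers $\kappa$. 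The uniqueness clause in the universal property of $\kappa$ then forces $i\circ\tilde p = \id_{A_k}$, so $i$ is surjective, $\kappa(A)=A_k$, and $\kappa$ is onto.

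The only genuinely non-formal step will be the second paragraph's verification that the image of a $\cstarqg$-morphism is a sub-compact-quantum-group, because it requires transporting the density conditions in \Cref{def.cstarqg} across the continuous surjection $A\to\kappa(A)$; everything thereafter is abstract nonsense that would work verbatim in any reflective subcategory whose reflective inclusion is closed under subobjects that arise as images. As an alternative to the image-factorization route, one could first show that $\kappa$ is an epimorphism in $\cqg$ (resp.\ $\cstarqg$) by a similar universal-property argument and then invoke \Cref{prop.epi} and \Cref{rem.epi} to upgrade epi to onto, but the direct route above bypasses the need to reference those results.
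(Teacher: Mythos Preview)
Your argument is correct and follows essentially the same route as the paper's proof: factor $\kappa$ through its image, observe that the image is itself a Kac object, and use the universal property of the reflection to force the inclusion $i$ to be an isomorphism. The paper streamlines by treating only the $\cqg$ case explicitly and leaving the verification that $\kappa(A)$ is a legitimate object of the ambient category implicit; your write-up is more careful on this point, particularly in the $\cstarqg$ case where you check that the (Antipode) density condition passes to the image.
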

\begin{proof}
To keep things streamlined, let us focus on $\cqg$. We have seen this sort of argument before, in a dual form, in \Cref{prop.spacescomm}. Let $\iota:A_k'\to A_k$ be the inclusion of $A_k'=\mathrm{Im}(\kappa)$, and denote the corestriction by $\kappa':A\to A_k'$. We have the diagram
\[
 \begin{tikzpicture}[anchor=base]
   \path (0,0) node (1) {$A_k'$} +(4,0) node (2) {$A_k$} +(2,-2) node (3) {$A$}; 
   \draw[->] (1) .. controls (1,.5) and (3,.5) .. (2) node[pos=.5,auto] {$\displaystyle \iota$};
   \draw[->] (2) .. controls (3,-.3) and (1,-.3) .. (1) node[pos=.5,auto] {$\displaystyle f$};
   \draw[<-] (1) -- (3) node[pos=.5,auto,swap] {$\displaystyle \kappa'$};
   \draw[<-] (2) -- (3) node[pos=.5,auto] {$\displaystyle \kappa$};
 \end{tikzpicture}
\]
where $f$ is the unique arrow $\kappa\to\kappa'$ in $A\downarrow\cqg_k$. Both triangles are commutative, and by the universality of $\kappa$, the loop $\iota\circ f$ must be the identity. Since $\iota$ was by definition one-to-one, it must be an isomorphism.
\end{proof}


\section{Monomorphisms}\label{se.mono}

One subject is conspicuously absent from \Cref{prop.epi}: what about monomorphisms? The main result of this section is just such a characterization:

\begin{proposition}\label{prop.mono}
A morphism in $\cqg$ is a monomorphism if and only if it is one-to-one. Similarly, a morphism $f$ in $\cstarqg$ is mono if and only if $\alg(f)$ is one-to-one.  
\end{proposition}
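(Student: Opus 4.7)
The straightforward direction---one-to-one morphisms are monos in either category---requires no comment; I would focus on the converses.

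For $\cqg$: given a mono $f:A\to B$, I would factor it as $A\xrightarrow{p} A/\ker(f)\hookrightarrow B$, which makes sense in $\cqg$ by \Cref{rem.quotientCQG}. The inclusion on the right is automatically mono, so mono-ness of $f$ forces $p$ to be mono as well, and the problem reduces to showing that a simultaneously surjective and monomorphic morphism $p:A\twoheadrightarrow C$ in $\cqg$ must be an isomorphism. Here I would invoke the Tannakian description of pullbacks from \Cref{prop.limTann}(b) applied to $A\times_C A$: $p$ being mono means $\pi_1=\pi_2$, which translates into the concrete statement that two $A$-comodule structures on a common underlying vector space that agree after push-forward to $C$ must be identical. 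In particular, the induced functor $p_*:\mathcal{M}^A\to\mathcal{M}^C$ cannot send non-isomorphic simples to isomorphic objects: if $V^\alpha\not\cong V^\beta$ were matched by a unitary $C$-isomorphism $\varphi:V^\beta\to V^\alpha$, transporting the $V^\beta$-structure onto the underlying space of $V^\alpha$ along $\varphi$ would produce two distinct $A$-structures equalised by $p_*$, contradicting the conclusion above.

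Combined with cosemisimplicity, this finishes the $\cqg$ case: writing $A=\bigoplus_\alpha C^\alpha_A$ as a direct sum of matrix subcoalgebras, $p$ restricts on each $C^\alpha_A$ to either the zero map (ruled out by $\varepsilon_C\circ p=\varepsilon_A$, which takes every $u^\alpha_{ii}$ to $1$) or an isomorphism onto a matrix subcoalgebra $C^{\gamma_\alpha}_C\subset C$; the injectivity of $\alpha\mapsto\gamma_\alpha$ established in the previous paragraph then forces $p$ to be injective on $A$, hence an isomorphism. I expect the translation from the abstract condition $\pi_1=\pi_2$ into the concrete injectivity-on-simples statement to be the main conceptual step, since everything else is essentially bookkeeping around \Cref{prop.limTann}.

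For $\cstarqg$, the assertion follows from the $\cqg$ case via density of $\alg(A)\subset A$ and the adjunction $\univ\dashv\alg$. If $\alg(f)$ is one-to-one---and hence mono in $\cqg$---while $g,h:C\to A$ satisfy $fg=fh$ in $\cstarqg$, applying $\alg$ gives $\alg(f)\alg(g)=\alg(f)\alg(h)$, so $\alg(g)=\alg(h)$, and continuity forces $g=h$. Conversely, if $\alg(f)$ is not one-to-one then the $\cqg$ case supplies distinct morphisms $u,v:D\to\alg(A)$ equalised by $\alg(f)$; transposing under $\univ\dashv\alg$ yields distinct morphisms $\univ(D)\to A$ in $\cstarqg$ equalised by $f$, contradicting $f$ being mono.
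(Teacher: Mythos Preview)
Your overall architecture is close to the paper's---both hinge on the Tannakian description of the pullback $A\times_C A$ from \Cref{prop.limTann}(b)---but you reorganize by first factoring through the image and reducing to a surjective mono $p:A\twoheadrightarrow C$, whereas the paper works directly with $f$ and produces, under the assumption that $f$ is not injective, an explicit unitary $B$-isomorphism between $A$-comodules that fails to be an $A$-map (a reflection across the complement of a $B$-fixed vector in a well-chosen simple). Your $\cstarqg$ reduction via the adjunction $\univ\dashv\alg$ is correct and essentially equivalent to the paper's use of ``$\alg$ is a faithful right adjoint''.

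There is, however, a genuine gap in the $\cqg$ argument. You assert that $p$ restricted to a matrix subcoalgebra $C^\alpha_A$ is ``either the zero map \ldots\ or an isomorphism onto a matrix subcoalgebra $C^{\gamma_\alpha}_C$''. This is not a general fact about coalgebra maps out of simple coalgebras: for instance, the counit $\mathbb C[G]\to\mathbb C$ for a finite non-abelian group $G$ is neither zero nor injective on any matrix block of dimension $>1$. What you need is that $p_*V^\alpha$ remains \emph{simple} over $C$; only then is $p|_{C^\alpha_A}$ injective (and hence an isomorphism onto the coefficient coalgebra of $p_*V^\alpha$). Your derived consequence ``two $A$-structures on the same space with the same $C$-pushforward coincide'' does yield this, but you have to use it again: if $p_*V^\alpha$ were non-simple, its endomorphism $*$-algebra in $\mathcal M^C$ would contain a non-scalar unitary $\psi$; conjugating the $A$-structure on $V^\alpha$ by $\psi$ produces a second $A$-structure with the same $C$-pushforward, distinct from the original since $\psi\notin\mathbb C=\End_A(V^\alpha)$---contradiction. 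Once you insert this step, your argument goes through. The paper's case split (``(a) some simple becomes non-simple over $B$'' versus ``(b) two simples become isomorphic over $B$'') is exactly the dichotomy between this missing simplicity-preservation argument and the injectivity-on-isomorphism-classes argument you did write down.
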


\begin{remark}
Morphisms in $\cstarqg$ that are injective at the algebraic level play an important role in \cite[6.2]{MR2805748}. The proposition gives a nice interpretation: They are precisely the monomorphisms of $\cstarqg$. 
\end{remark}

\begin{remark}
In the commutative setting, where all distinctions between the algebraic and the $C^*$-algebraic sides of the picture disappear (\Cref{rem.cqgab}), the analogous result would be that epimorphisms of compact groups are surjective. This is \cite[Proposition 9]{MR0260829}, and in fact, the proof below is a paraphrase of Reid's. 
\end{remark}

\begin{proof}
First, let's reduce the second part of the statement to the first. Recall that right adjoints (such as $\alg$) send monomorphisms to monomorphisms, so a morphism $f$ in $\cstarqg$ can only be mono if $\alg(f)$ is. On the other hand, the faithfulness of $\alg$ (a consequence of the density of $\alg(A)\subset A$ for $A\in\cstarqg$) implies the converse. Indeed, if $\alg(f)$ is mono and $f\circ g=f\circ h$, the series \[ \alg(f)\circ\alg(g)=\alg(f)\circ\alg(h)\quad \Rightarrow\quad \alg(g)=\alg(h)\quad \Rightarrow\quad g=h \] of equalities does the trick (the first implication says that $\alg(f)$ is a monomorphism, while the second one is faithfulness).

We are now left with the first statement, on CQG algebras. Just as in the case of epimorphisms treated in \Cref{prop.epi}, the implication injective $\Rightarrow$ mono is the easy part. Focusing on the converse, let $f:A\to B$ be a monomorphism in $\cqg$. Let also $\pi:P=A\times_BA\to A$ be one of the defining projections of the pullback (in the category $\cqg$). The functor $\cqg(\bullet,P)$ represents the functor sending $C\in\cqg$ to the set of pairs of morphisms $g,h:C\to A$ satisfying the condition $fg=fh$. Since the latter implies $g=h$ by $f$ being a monomorphism, this functor is isomorphic to $\cqg(\bullet,A)$, and it follows that the natural transformation $\cqg(\bullet,P)\to\cqg(\bullet, A)$ induced by $ \pi$ (and hence $ \pi$ itself) is an isomorphism; we have made use of this sort of result, in its dual form having to do with epimorphisms, in \Cref{prop.epi}. It will actually be more convenient to say it like this: The diagonal map $d:A\to A\times_BA=P$ is an isomorphism; indeed, its very definition implies that $\pi d=\id$.  

Now transport all of the above to the level of (finite-dimensional, unitary) comodules. Recalling what the category of $P$-comodules looks like from part (b) of \Cref{prop.limTann}, the fact $d$ is an isomorphism says that $V\mapsto (V,V,\id)$ is an equivalence from $A$-comodules to $P$-comodules. In particular, the essential surjectivity of this functor implies that for any $P$-comodule $(V,V',\varphi)$, the a priori $B$-comodule isomorphism $\varphi:V\to V'$ is actually an $A$-comodule map. 

According to the previous paragraph, it is enough, assuming that $f$ is not one-to-one, to find finite-dimensional, unitary $A$-comodules $V$ and $V'$ together with a unitary isomorphism $\varphi:V\to V'$ as $B$-comodules which does \define{not} preserve the $A$-comodule structures. Let us simplify the situation further. Suppose $V$ is a non-trivial, simple, unitary $A$-comodule (non-trivial meaning not isomorphic to the monoidal unit of the cateory of comodules) which has trivial components when regarded as a $B$-comodule via scalar corestriction through $f:A\to B$. This means that there is some non-zero vector $v\in V$ fixed by $B$ in the sense that $v_1\otimes f(v_2)=v\otimes 1$, but not fixed by $A$. The unitary reflection across the orthogonal complement of $v$ would then be a morphism in $\mathcal M^B$ but not in $\mathcal M^A$, and we would be done.   

In conclusion, it suffices to find $V$ as above. Since the Peter-Weyl theorem for CQG algebras expresses each as a direct sum of $W^{\oplus \dim W}$ for $W$ ranging over its set of unitary simple comodules, the only ways in which $f$ can be non-injective are if (a) some simple $A$-comodule becomes non-simple as a $B$-comodule, or (b) there are two distinct simples over $A$ which become isomorphic over $B$ (we will see soon that in fact (a) always happens). 

In case (a), choose some such simple $W\in\widehat{A}$. Then, the trivial comodule has multiplicity one in $W^*\otimes W$ as an $A$-comodule, but strictly larger than one over $B$. Hence, there is a simple $V\le W^*\otimes W$ that will satisfy the sought-for conditions. In case (b), let $W$ and $W'$ be non-isomorphic, unitary simple $A$-comodules which bcome isomorphic over $B$. Then, $W^*\otimes W'$ does not contain the trivial comodule over $A$, but it does over $B$. So as before, we can find our desired $V$ among the simple summands of $W^*\otimes W'$.
\end{proof}


\addcontentsline{toc}{section}{References}

\end{document}